\documentclass[review,3p,times]{elsarticle}

\usepackage{lineno,hyperref}
\usepackage{hyperref}
\usepackage{amsfonts}

\usepackage{array,booktabs}
\usepackage{tabularx} 
\usepackage{ragged2e} 

\usepackage{amsmath,amsthm,amsfonts,amssymb,latexsym,epsfig}
\usepackage{hyperref}
\usepackage{color}
\usepackage{graphicx}
\usepackage{epstopdf}  
\usepackage{multirow}
\usepackage{float}
\usepackage{subfigure}
\usepackage{cases}

\usepackage{algorithm}
\usepackage{algorithmic}

\newtheorem{theorem}{Theorem}[section]

\newtheorem{remark}[theorem]{Remark}
\newtheorem{lemma}[theorem]{Lemma}

\newtheorem{example}[theorem]{Example}

\biboptions{sort&compress}

\numberwithin{equation}{section}

\modulolinenumbers[5]

\journal{}

\begin{document}

\begin{frontmatter}

\title{A new approach for the model updating of $(\star,\epsilon)$-palindromic systems with no spill-over\tnoteref{mytitlenote}}
\tnotetext[mytitlenote]{Supported by the Research Foundation of Education Department of Hunan Province (Grant No. 23A0266), Hunan Provincial Natural Science Foundation (Grant No. 2025JJ50034) and Natural Science Foundation of Changsha city (Grant No. kq2502074).}


\author[mymainaddress]{Kang Zhao\corref{mycorrespondingauthor}}
\cortext[mycorrespondingauthor]{Corresponding author}
\ead{zkmath@csust.edu.cn }
\address[mymainaddress]{School of Mathematics and Statistics, Changsha University of Science and Technology, Changsha, 410114, China}
\author[mymainaddress]{Xin Wang}
\author[mymainaddress]{Xiaoxiao Ma}


\begin{abstract}
In this paper, model updating problem of the  $(\star, \epsilon)$-palindromic system with no spill-over (PMUP) is considered. First, we drive the spectral decomposition of $(\star, \epsilon)$-palindromic quadratic matrix polynomial $P(\lambda)$ by a standard pair $(X,\ J)$ and a parameter matrix $\Gamma$. And the structures of $\Gamma$ are provided when $J$ is assumed to be a block diagonal matrix. Using the spectral decomposition of $P(\lambda)$,  a new sufficient solvable condition of the PMUP is provided and a set of analytical solutions is characterized.  Finally, two algorithms are developed to address the cases of both simple and multiple eigenvalues. and the performance of our proposed method is illustrated by several numerical examples.\end{abstract}


\begin{keyword}
Palindromic system \sep model updating \sep no spill-over \sep inverse eigenvalue problem
\MSC[2010] 65F15 \sep 65F35 \sep 15A18
\end{keyword}

\end{frontmatter}

\section{Introduction}
In this paper, we consider the model updating problem of the following $(\star, \epsilon)$-structured palindromic quadratic system \cite{Mackey-2006b}
\begin{align}
 P(\lambda)=\lambda^2A+\lambda Q+\epsilon A^{\star}\in\mathbb{K}[\lambda],\label{gs-1}
\end{align}
where $\mathbb{K}\in\{\mathbb{R},\mathbb{C}\}$ denotes the field of real and complex numbers, $\epsilon\in\{1,\ -1\}$, $Q^{\star}=\epsilon Q$, and $\star=H$ (Hermitian) or $T$ (transpose). The palindromic quadratic eigenvalue problem (PQEP) \cite{Li-2010-NM,Chu-EKW-2010} is to find $\lambda\in\mathbb{K}$ and $x\in\mathbb{K}^n$ such that 
\begin{equation*}
P(\lambda)x=0,
\end{equation*}
where $(x,\ \lambda)$ is called an eigenpair of $P(\lambda)$. If $A$ is nonsingular, then $P(\lambda)$ has $2n$ finite nonzero eigenvalues. In this paper, we always assume that  $A$ is nonsingular.   It is well known that the eigenvalues of $P(\lambda)$ occur in pairs $(\lambda, 1/\lambda^{\star})$, or in quadruples $(\lambda, \bar{\lambda}, 1/{\lambda}, 1/{\bar{\lambda}})$ when $\mathbb{K}=\mathbb{R}$.
There are several numerical algorithms for the PQEP, such as Jacobi-type method \cite{Ipsen-2004,Mackey-2006a}, structure-preserving doubling algorithm \cite{GuoCH-siam-2010,HuangTM-siam-2009,Lu-NLAA-2015,HuangTM-2011} and palindromic QR algorithm \cite{Schroder-2008}.
The PQEP arises in a variety of applications, for example,  the vibration analysis of high-speed trains \cite{Hilliges-2004-1,Ipsen-2004}, the mathematical modeling and numerical simulation of  periodic surface acoustic wave filters \cite{Zaglmayr-2002} and the computation of the Crawford number \cite{Higham-2002}. For more examples, one can see \cite{Lin2015-book} and the references therein.   

Model updating problem (MUP), which can be seen as an inverse eigenvalue problem (IEP), is to construct a new system  using the coefficient matrices and a few ``unwanted" eigenpairs of original system, such that the prescribed or measured eigendata can be reproduced by the updated system. Generally, the unmeasured remaining eigenvalues and associated eigenvectors may changed after updating, which is called spill-over phenomenon in the literatures \cite{ChuMT-2008}. Thus, the stability of the updated system cannot be guaranteed. How to keep the remaining or unmeasured eigenapirs unchanged is of practical importance, i.e., the updated system should preserve no spill-over \cite{Chu-mc-2009,ChuMT-2008}. No spill-over MUP of the second-order systems have been intensively investigated, such as \cite{Kuo-laa-2012,Chu-mc-2009,ChuMT-2008,ChuD-siam-2009,11_2007Updating,Jia-siam-2011,Zhao-siam-2023,Saha-2025-JCAM,Mao-laa,Ganai-laa-2022}. In this paper, we consider the no spill-over MUP of $(\star, \epsilon)$-structured palindromic quadratic system (PMUP), which can be formulated as follows. 

{\bf Problem (PMUP)}
Given an $(\star, \epsilon)$-structured palindromic system $(A, Q)$ as in (\ref{gs-1}) and its $p$ eigenpairs $\{(x_j, \lambda_j)\}_{j=1}^p$. Update $(A, Q)$ to a new system $(\tilde{A}, \tilde{Q})$ with $\tilde{Q}^{\star}=\epsilon\tilde{Q}$, such that $\{(x_j, \lambda_j)\}_{j=1}^p$ are replaced by $p$ new prescribed eigenpairs $\{(\tilde{x}_j, \tilde{\lambda}_j)\}_{j=1}^p$ and the remaining $2n-p$ eigenpairs $\{(x_j, \lambda_j)\}_{j=p+1}^{2n}$ of original system $(A, Q)$ are kept unchanged. 

As is known, the MUP may be solved by the spectral decomposition of matrix polynomial.
Recently, spectral decompositions of quadratic matrix polynomials have been intensively investigated, for example, the damped symmetric system  \cite{Chu-mc-2009}, the undamped gyroscopic system \cite{Jia-siam-2011} and the vibroacoustic system \cite{Zhao-siam-2023,Qian-siam-2017}, in which the no spill-over MUP of these systems are considered. However, they did not consider the updating problem for the multiple eigenvalues. For the damped symmetric system with multiple eigenvalues, Kuo \cite{Kuo-laa-2012} provided some necessary and sufficient conditions for solvability of no spill-over MUP. However, their method can not be used to solve the PMUP. Spectral decomposition of palindromic quadratic matrix polynomial has been considered by \cite{Cai-2016}, which is used to solve the IEP of palindromic quadratic system. However, they only gave the necessary condition that a matrix pair is a standard pair of $P(\lambda)$ as in (\ref{gs-1}), and then it cannot be guaranteed that the solution of IEP obtained by their method satisfies the $\star$-symmetric structures of palindromic system. Therefore, the spectral decomposition of palindromic matrix polynomial remains open. Recently, by investigating the $\star$-symmetric solution of the matrix equation $AXB+CXD=E$, \cite{Zhao-COAM-2018} provided some sufficient solvable conditions that no spill-over updating for the $(\star, 1)$-palindromic quadratic system is possible. However, their method cannot solve the PMUP when the matrix equation is inconsistent.  The main contributions of this paper are:

(1) Spectral decomposition of the $(\star,\epsilon)$-palindromic quadratic matrix polynomial $P(\lambda)$ is derived in terms of a standard pair $(X, J)$ and a parameter matrix $\Gamma$. When $J$ is a block diagonal matrix, the structures of $\Gamma$ is provided. 

(2) A set of analytical solutions of PMUP is characterized, and two algorithms for both simple and multiple eigenvalues are proposed.

Throughout this paper, the following notations will be used. Denoted by $\mathbb{C}^{m\times n}$ and $\mathbb{R}^{m\times n}$ the set of all complex and real $m\times n$ matrices, respectively. Let $I_m$ be the $m\times m$ identity matrix, $||B||_F$ be the Frobenius norm of matrix $B$, and $\sigma(B)$ be the set of all eigenvalues of $B$.

\section{The spectral decomposition}\label{sec2}

In this section, we give a sufficient and necessary condition that a matrix pair $(X,\ J)\in\mathbb{K}^{n\times 2n}\times \mathbb{K}^{2n\times 2n}$ is a standard pair \cite{Lancaster-b-1982}  of $P(\lambda)$. Let 
\begin{equation}\label{xgx-1}
X_L=\begin{bmatrix}
X\\
XJ\\
\end{bmatrix},\ Y_L=\begin{bmatrix}
X\\
XJ^{-1}\\
\end{bmatrix}.
\end{equation}
\begin{lemma}\cite{Lancaster-b-1982}\label{lem-1}
A matrix pair $(X,\ J)$ is a standard pair of the $(\star,\epsilon)$-structure matrix polynomial $P(\lambda)$ if and only if the matrix $X_L$ defined in (\ref{xgx-1}) is nonsingular and 
\begin{equation}\label{gs-3}
AXJ^2+QXJ+\epsilon A^{\star}X=0,
\end{equation}
holds. 
\end{lemma}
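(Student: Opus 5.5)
We take the definition of a standard pair from \cite{Lancaster-b-1982}. For a monic polynomial $L(\lambda)=\lambda^2 I+\lambda A_1+A_0$, a pair $(X,J)\in\mathbb{K}^{n\times 2n}\times\mathbb{K}^{2n\times 2n}$ is standard if $X_L$ is nonsingular and $XJ^2+A_1XJ+A_0X=0$. For a polynomial with nonsingular leading coefficient, a standard pair is by definition a standard pair of the monic polynomial obtained after multiplying by the inverse of the leading coefficient. The plan is therefore to pass from $P(\lambda)$ to its monic normalization $L(\lambda)=A^{-1}P(\lambda)=\lambda^2 I+\lambda A^{-1}Q+\epsilon A^{-1}A^{\star}$, which is legitimate because $A$ is assumed nonsingular throughout. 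We then check that the two defining conditions for $L$ match the two conditions in the statement.

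For necessity, suppose $(X,J)$ is a standard pair of $P(\lambda)$, i.e.\ of $L(\lambda)$. Then $X_L$ is nonsingular by definition. Moreover,
\[XJ^2+A^{-1}QXJ+\epsilon A^{-1}A^{\star}X=0.\]
Left-multiplying by $A$ gives exactly (\ref{gs-3}).

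For sufficiency, assume $X_L$ is nonsingular and (\ref{gs-3}) holds. Left-multiplying (\ref{gs-3}) by $A^{-1}$ recovers the monic equation above, so both defining conditions hold and $(X,J)$ is a standard pair of $L(\lambda)$, hence of $P(\lambda)$. It may also help to record the equivalent companion form $C X_L = X_L J$, where
\[C=\begin{bmatrix}0&I\\-\epsilon A^{-1}A^{\star}&-A^{-1}Q\end{bmatrix}.\]
This shows $J$ is similar to the companion matrix, which justifies $\sigma(J)=\sigma(P)$ and, since $0\notin\sigma(P)$, the invertibility of $J$ needed for $Y_L$ in (\ref{xgx-1}).

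The only real obstacle is definitional: one must fix which notion of standard pair from \cite{Lancaster-b-1982} is meant for a non-monic polynomial. If it is the definition through the companion linearization, then one additionally shows the equivalence $(\ref{gs-3})\Leftrightarrow CX_L=X_LJ$. The first block row of $CX_L=X_LJ$ is the identity $XJ=XJ$. The second block row is $A^{-1}$ times (\ref{gs-3}). The $(\star,\epsilon)$ structure plays no role here beyond the form of the coefficients.
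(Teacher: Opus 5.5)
Your proposal is correct. The paper gives no proof of this lemma and simply cites the monograph, and your argument is the standard one behind that citation: you normalize to the monic polynomial $A^{-1}P(\lambda)$, which is legitimate since $A$ is nonsingular, and you use the companion identity $CX_L=X_LJ$ to cover the linearization-based definition of a standard pair.
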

For any matrix $B\in\mathbb{K}^{m\times m}$, let
\begin{equation}\label{gs-gamma}
\mathcal{S}_{(B,\star,\epsilon)}=\left\{S\in\mathbb{K}^{m\times m}|S^{\star}=-\epsilon S,\ B S=SB^{-\star}\right\}.
\end{equation}
\begin{theorem}\label{main-thm}
A matrix pair $(X,\ J)\in\mathbb{K}^{n\times 2n}\times \mathbb{K}^{2n\times 2n}$ is a standard pair of the $(\star,\epsilon)$-structure matrix polynomial $P(\lambda)$ if and only if the matrix $X_L$ defined in (\ref{xgx-1}) is nonsingular, and  there is a nonsingular matrix $\Gamma\in\mathcal{S}_{(J,\star,\epsilon)}$ satisfying
\begin{equation}\label{gs-4}
X\Gamma X^{\star}=0.
\end{equation}
And then, the coefficient matrices $A, Q$ can be given by
\begin{equation}\label{gs-5}
A=(XJ\Gamma X^{\star})^{-1},\ \ Q=-AXJ^2\Gamma X^{\star}A,
\end{equation}
with $Q^{\star}=\epsilon Q$.
\end{theorem}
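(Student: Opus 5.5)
The plan is to derive both directions from the standard resolvent representation for standard pairs (Lancaster–Gohberg–Rodman): if $(X,J)$ is a standard pair of the monic-normalized polynomial, then
\[
\begin{bmatrix} X\\ XJ\end{bmatrix}^{-1}\begin{bmatrix}0\\ A^{-1}\end{bmatrix}=:Y
\]
defines a standard triple $(X,J,Y)$. This triple satisfies the moment relations $XY=0$ and $XJY=A^{-1}$, together with $AXJ^2Y+QXJY=0$, i.e. $Q=-AXJ^2YA$. Setting $Y=\Gamma X^\star$ then reproduces exactly (\ref{gs-4}) and (\ref{gs-5}). So the whole proof reduces to showing that the palindromic structure forces $Y$ to have the form $\Gamma X^\star$ with $\Gamma\in\mathcal S_{(J,\star,\epsilon)}$ nonsingular, and conversely.

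\emph{Necessity.} Let $(X,J)$ be a standard pair. By Lemma \ref{lem-1}, $X_L$ is nonsingular and (\ref{gs-3}) holds.
\begin{itemize}
\item Apply $\star$ to $P(\lambda)$ and use $Q^\star=\epsilon Q$: we get $P(\lambda)^\star=\epsilon\lambda^{2}P(1/\lambda)$ up to conjugating $\lambda$ when $\star=H$.
\item Consequently $(X^\star, J^{-\star})$ gives a left standard pair for $P$. Equivalently, $\bigl(J^{-\star},X^\star\bigr)$ plays the role of a left pair. This uses nonsingularity of $A$ and $J$: $J$ is similar to the companion linearization, which is invertible since $A$ and $\epsilon A^\star$ are nonsingular.
\item Since left standard pairs of $P$ are unique up to similarity, there is a unique nonsingular $T$ with $Y=TX^\star$ and $TJ^{-\star}=JT$. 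Put $\Gamma=T$; then $J\Gamma=\Gamma J^{-\star}$.
\item The symmetry $\Gamma^\star=-\epsilon\Gamma$ comes from uniqueness. Take $\star$ of the relations $XY=0$ and $XJY=A^{-1}$, and use $A^{-\star}$ together with (\ref{gs-3}) multiplied appropriately. This shows that $-\epsilon\Gamma^\star$ satisfies the same defining equations as $\Gamma$, namely $X_L\,(\cdot)\,X^\star$ equals the same block column. Nonsingularity of $X_L$ then gives $\Gamma^\star=-\epsilon\Gamma$.
\item Finally, $XY=0$ gives (\ref{gs-4}), and $XJY=A^{-1}$ together with the $Q$ formula give (\ref{gs-5}).
\end{itemize}
$Q^\star=\epsilon Q$ is checked directly from (\ref{gs-5}), using $\Gamma^\star=-\epsilon\Gamma$ and $J^{2}\Gamma=\Gamma J^{-2\star}$.

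\emph{Sufficiency.} Assume $X_L$ is nonsingular and $\Gamma\in\mathcal S_{(J,\star,\epsilon)}$ is nonsingular with $X\Gamma X^\star=0$.
\begin{itemize}
\item First show that $XJ\Gamma X^\star$ is invertible. From $X\Gamma X^\star=0$ we get
\[
X_L\Gamma\begin{bmatrix}X^\star & J^{-\star}X^\star\end{bmatrix}=\begin{bmatrix}0 & X\Gamma J^{-\star}X^\star\\ XJ\Gamma X^\star & *\end{bmatrix}.
\]
Here $X\Gamma J^{-\star}X^\star=XJ\Gamma X^\star$, and the left side is nonsingular because $[X^\star\ J^{-\star}X^\star]=Y_L^\star$ with $Y_L=X_LJ^{-1}$-type nonsingularity. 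Hence the anti-diagonal block is nonsingular.
\item Define $A$ and $Q$ by (\ref{gs-5}).
\item Verify (\ref{gs-3}) by multiplying $AXJ^2+QXJ+\epsilon A^\star X$ on the right by the nonsingular $[\Gamma X^\star\ \ J\Gamma X^\star]$, or equivalently by $\Gamma Y_L^\star$-type factors. Each block reduces, via $X\Gamma X^\star=0$, the definitions, and $(A^{-1})^\star=XJ\Gamma^\star\cdots=\epsilon X\Gamma J^{-\star}\ldots$, to zero.
\item Lemma \ref{lem-1} then finishes the argument.
\end{itemize}

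\emph{Main obstacle.} The delicate step is the identity $\epsilon A^{\star}=-(X\Gamma J^{\star}\ldots)$ bookkeeping, i.e. translating $(XJ\Gamma X^\star)^\star$ into $X J^{-1}\Gamma X^\star$-type expressions using $\Gamma^\star=-\epsilon\Gamma$ and $J\Gamma=\Gamma J^{-\star}$. I will first establish the auxiliary equalities
\[
XJ^{k}\Gamma X^\star=\bigl(XJ^{-k}\Gamma X^\star\bigr)\cdot(\text{sign}),
\]
and use them uniformly in both directions.
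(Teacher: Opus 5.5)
The necessity direction is sound, but the sufficiency direction has a step that fails as written and omits one claim of the theorem.

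\textbf{The multiplier in the sufficiency argument.} Put $G_k:=XJ^k\Gamma X^{\star}$ and $E:=AXJ^2+QXJ+\epsilon A^{\star}X$. You multiply by $[\Gamma X^{\star},\ J\Gamma X^{\star}]=\Gamma Y_L^{\star}$. The first block, $E\Gamma X^{\star}=AG_2+QG_1$, does vanish by the definition of $Q$. The second block, however, is $EJ\Gamma X^{\star}=AG_3+QG_2+\epsilon A^{\star}G_1$, and it contains $XJ^3\Gamma X^{\star}$.

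None of the tools you list determines $G_3$. The condition $X\Gamma X^{\star}=0$ is $G_0=0$, the definitions give $G_1$ and $G_2$, and the $\star$-relation concerns only $G_{-1}$. In general $G_{-k}=-\epsilon G_k^{\star}$. Your displayed auxiliary identity drops this $\star$ and is false: at $k=1$ it would force $A^{-1}=\pm A^{-\star}$. Evaluating $G_3$ would require the recurrence $XJ^2=-A^{-1}QXJ-\epsilon A^{-1}A^{\star}X$, which is (\ref{gs-3}) itself, so the step is circular.

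The repair is to multiply instead by the equally nonsingular matrix $J^{-1}\Gamma Y_L^{\star}=[J^{-1}\Gamma X^{\star},\ \Gamma X^{\star}]$. Since $G_{-1}=-\epsilon G_1^{\star}=-\epsilon A^{-\star}$, you get $EJ^{-1}\Gamma X^{\star}=AG_1+QG_0+\epsilon A^{\star}G_{-1}=I+0-I=0$, and $E\Gamma X^{\star}=0$ as before. This is exactly what the paper's two identities $X_L\Gamma Y_L^{\star}=\left(\begin{bmatrix}Q&A\\ A&0\end{bmatrix}\right)^{-1}$ and $X_LJ^{-1}\Gamma Y_L^{\star}=\mbox{diag}(-\epsilon A^{-\star},A^{-1})$ encode.

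\textbf{The claim $Q^{\star}=\epsilon Q$.} In the sufficiency direction you never prove it, although the theorem asserts it for the $Q$ built in (\ref{gs-5}). You place the check in the necessity direction, where it is a hypothesis. The ``direct'' check also does not close. The relations $\Gamma^{\star}=-\epsilon\Gamma$ and $J^2\Gamma=\Gamma J^{-2\star}$ only give $Q^{\star}=\epsilon A^{\star}G_{-2}A^{\star}$, and relating $G_{-2}$ to $A$ and $Q$ again needs (\ref{gs-3}). Once (\ref{gs-3}) is established, multiply it on the right by $J^{-2}\Gamma X^{\star}$. This gives $A^{\star}G_{-2}=QA^{-\star}$, hence $Q^{\star}=\epsilon Q$.

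\textbf{The necessity direction.} Your argument is a valid alternative to the paper's. The paper defines $\Gamma$ by the explicit formula (\ref{gs-g}), checks its properties by hand, and reuses the formula in (\ref{gg-9}). You instead obtain $\Gamma$ as the unique similarity between the left standard pairs $(J,Y)$ and $(J^{-\star},X^{\star})$, so $J\Gamma=\Gamma J^{-\star}$ comes for free.

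One point needs tightening. Nonsingularity of $X_L$ alone only gives $-\epsilon\Gamma^{\star}X^{\star}=\Gamma X^{\star}$. To conclude $\Gamma^{\star}=-\epsilon\Gamma$ you also need $J\Gamma^{\star}=\Gamma^{\star}J^{-\star}$, which is the $\star$ of the intertwining relation, and then uniqueness of the similarity. Equivalently, $D=\Gamma+\epsilon\Gamma^{\star}$ satisfies $DY_L^{\star}=[DX^{\star},\ JDX^{\star}]=0$, so $D=0$.
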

\begin{proof}
(Necessity) If $(X,\ J)$ is a standard pair of $P(\lambda)$, it follows from Lemma \ref{lem-1} that (\ref{gs-3}) holds and $X_L$ is nonsingular, which implies that the matrix $Y_L$ defined in (\ref{xgx-1}) is also nonsingular. Multiplying on the left side of (\ref{gs-3}) by $X^{\star}$ leads to 
\begin{equation}\label{gs-6}
X^{\star}AXJ^2+X^{\star}QXJ+\epsilon X^{\star}A^{\star}X=0.
\end{equation}
Taking the $\star$-transpose of (\ref{gs-3}) and multiplying on the right side by $X$, we can obtain that 
\begin{equation}\label{gs-6-1}
X^{\star}A^{\star}X+\epsilon J^{-\star}X^{\star}QX+\epsilon (J^{-\star})^2X^{\star}AX=0.
\end{equation}
Let \begin{equation}\label{gs-g}
\Gamma:=\left(Y_L^{\star}\begin{bmatrix}
Q & A\\
A & 0\\
\end{bmatrix}X_L\right)^{-1}.
\end{equation}
Then, 
\begin{equation}\label{gs-7}
\Gamma^{-1}=X^{\star}QX+J^{-\star}X^{\star}AX+X^{\star}AXJ.
\end{equation}
It is easy to verify from (\ref{gs-6}) that $X^{\star}QX+X^{\star}AXJ=-\epsilon X^{\star}A^{\star}XJ^{-1}.$
Then,  (\ref{gs-7}) can be rewritten as 
$\Gamma^{-1}=J^{-\star}X^{\star}AX-\epsilon X^{\star}A^{\star}XJ^{-1},$
which implies that $\Gamma^{\star}=-\epsilon \Gamma.$ By (\ref{gs-6}) and (\ref{gs-6-1}), we can obtain that 
\begin{equation*}\begin{array}{ll}
\Gamma^{-1}J & =X^{\star}QXJ+J^{-\star}X^{\star}AXJ+X^{\star}AXJ^2=-\epsilon X^{\star}A^{\star}X+J^{-\star}X^{\star}AXJ\\
& =J^{-\star}X^{\star}QX+(J^{-\star})^2X^{\star}AX+J^{-\star}X^{\star}AXJ=J^{-\star}\Gamma^{-1},\\
\end{array}
\end{equation*}
which implies that $J\Gamma=\Gamma J^{-\star}$, i.e,  the matrix $\Gamma$ defined in (\ref{gs-g}) satisfying $\Gamma\in\mathcal{S}_{(J,\star,\epsilon)}$. It follows from (\ref{gs-g}) that 
\begin{equation*}\begin{bmatrix}
Q & A\\
A & 0\\
\end{bmatrix}^{-1}=X_L\Gamma Y_L^{\star}=\begin{bmatrix}
X\Gamma X^{\star} & X\Gamma J^{-\star}X^{\star}\\
XJ\Gamma X^{\star} & XJ\Gamma J^{-\star}X^{\star}\\
\end{bmatrix},\end{equation*}
which implies that $X\Gamma X^{\star}=0$, $A=(XJ\Gamma X^{\star})^{-1}$ and $Q=AXJ^2\Gamma X^{\star}A$, i.e.,  (\ref{gs-4}) and (\ref{gs-5}) are satisfied. 

(Sufficiency) If there exists a nonsingular matrix $\Gamma\in\mathcal{S}_{(J,\star,\epsilon)}$ such that (\ref{gs-4}) holds, then we have 
\begin{equation}\label{gs-8}
X_L\Gamma Y_L^{\star}=\begin{bmatrix}
X\\
XJ\\
\end{bmatrix}\Gamma\begin{bmatrix}
X^{\star} & J^{-\star}X^{\star}\\
\end{bmatrix}=\begin{bmatrix}
0 & X\Gamma J^{-\star}X^{\star}\\
XJ\Gamma X^{\star} & XJ\Gamma J^{-\star}X^{\star}\\
\end{bmatrix}.
\end{equation}
Since $X_L$ and $Y_L$ are nonsingular, it follows from (\ref{gs-8}) that the matrix $XJ\Gamma X^{\star}$ is also nonsingular, which implies that the matrix $A$ given by (\ref{gs-5}) is well-defined. Substituting the matrices $A$ and $Q$ given by (\ref{gs-5}) into (\ref{gs-8}) leads to 
\begin{equation}\label{gs-9}
\Gamma^{-1}=Y_L^{\star}\begin{bmatrix}
Q & A\\
A & 0\\
\end{bmatrix}X_L,
\end{equation}
 which means that the matrix $\Gamma$ has the form as in (\ref{gs-g}). Recall that $\Gamma^{\star}=-\epsilon \Gamma$ and $J\Gamma=\Gamma J^{-\star}$, it follows that
$\Gamma J^{\star}=J^{-1}\Gamma.$
 Taking $\star$-transpose of the matrix $A$ given in (\ref{gs-5}), we have  
 \begin{equation}\label{gs-10}
 A^{\star}=(X\Gamma^{\star}J^{\star}X^{\star})^{-1}=-\epsilon(XJ^{-1}\Gamma X^{\star})^{-1}.
 \end{equation}
Note that 
\begin{equation}\label{gs-12}
X_LJ^{-1}\Gamma Y_L^{\star}=\begin{bmatrix}
X\\
XJ\\
\end{bmatrix}J^{-1}\Gamma\begin{bmatrix}
X^{\star} & J^{-\star}X^{\star}\\
\end{bmatrix}=
\begin{bmatrix}
XJ^{-1}\Gamma X^{\star} & 0\\
0 & X\Gamma J^{-\star}X^{\star}\\
\end{bmatrix}.
\end{equation}
Substituting the matrices $A$ and $A^{\star}$ given by (\ref{gs-5}) and (\ref{gs-10}) into (\ref{gs-12}), we can obtain that
\begin{equation}\label{gs-13}
\Gamma^{-1}J=Y_L^{\star}\begin{bmatrix}
-\epsilon A^{\star} & 0\\
0 & A\\
\end{bmatrix}X_L.
\end{equation}
 Combining (\ref{gs-9}) and (\ref{gs-13}), we have 
\begin{equation*}
AXJ^2+QXJ+\epsilon A^{\star}X=0,\ \ \
AXJ^2+\epsilon Q^{\star}XJ+\epsilon A^{\star}X=0,
\end{equation*}
which implies that (\ref{gs-3}) holds and $\epsilon Q^{\star}XJ=QXJ$. It follows that $Q^{\star}=\epsilon Q$, since $J$ is nonsingular and the matrix $X\in\mathbb{K}^{n\times 2n}$ is of full row rank, i.e., the matrix $Q$ given in (\ref{gs-5}) is well-defined. 
\end{proof}

\section{Structures of $\mathcal{S}_{(J,\star,\epsilon)}$}\label{sec3}

In this section, we characterize a standard pair $(X,\ J)$ of $P(\lambda)$ and give the structure of the matrix $\Gamma\in\mathcal{S}_{(J,\star,\epsilon)}$. First, we give some spectral structures of the palindromic matrix polynomials. 

\begin{lemma}\label{lem-6}
Let $\mathbb{K}=\mathbb{R}$ and $\lambda_1,\lambda_2,\ldots,\lambda_{2n}$ be the eigenvalues of $(T,\epsilon)$-palindromic $P(\lambda)$ as in (\ref{gs-1}). Then

(1) $P(\lambda)$ is $T$-anti-palindromic with odd degree:  both of $1$ and $-1$ are eigenvalues of $P(\lambda)$, which have odd algebraic multiplicities. Moreover, $\Pi_{k=1}^{2n}\lambda_k=-1.$

(2) $P(\lambda)$ is $T$-anti-palindromic with even degree ($T$-palindromic):  if $1$ (or $-1$) is eigenvalue of $P(\lambda)$, then its algebraic multiplicity must be even. And all the eigenvalues of $P(\lambda)$ satisfy  
$\Pi_{k=1}^{2n}\lambda_k=1.$
\end{lemma}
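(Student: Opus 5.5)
The plan is to reduce everything to the scalar polynomial $p(\lambda):=\det P(\lambda)$, which has degree $2n$, leading coefficient $\det A\neq 0$ and roots $\lambda_1,\ldots,\lambda_{2n}$. I read case (1) as the situation $\epsilon^n=-1$ (that is, $\epsilon=-1$ with $n$ odd) and case (2) as $\epsilon^n=1$. The argument will use only a reciprocity identity for $p$ and the fact that $\mathbb{K}=\mathbb{R}$.

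\emph{Step 1: the reciprocity identity.} Using $Q^T=\epsilon Q$ (equivalently $\epsilon Q^T=Q$) and $\epsilon^2=1$, I will check directly that
\begin{equation*}
\lambda^2 P(1/\lambda)^T=A^T+\lambda Q^T+\epsilon\lambda^2 A=\epsilon\left(\lambda^2A+\lambda\,\epsilon Q^T+\epsilon A^T\right)=\epsilon P(\lambda).
\end{equation*}
Taking determinants of $n\times n$ matrices then gives
\begin{equation*}
p(\lambda)=\epsilon^n\lambda^{2n}p(1/\lambda).
\end{equation*}
Two consequences follow. First, comparing constant and leading coefficients (the constant term is $\det(\epsilon A^T)=\epsilon^n\det A$) and using Vieta's formulas yields $\prod_{k=1}^{2n}\lambda_k=\epsilon^n$. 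This is $-1$ in case (1) and $1$ in case (2), which are exactly the product formulas claimed. Second, for every $\mu\neq 0$ the identity shows that $\mu$ and $1/\mu$ are roots of $p$ with the same multiplicity.

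\emph{Step 2: grouping the eigenvalues.} Since $p$ has real coefficients, nonreal roots occur in conjugate pairs $\mu,\bar\mu$ of equal multiplicity, and each such pair contributes $|\mu|^2>0$ to the product. Real roots $\mu\neq\pm1$ pair with $1/\mu\neq\mu$, again with equal multiplicity, and each pair contributes $1$. I also need the total multiplicity of all roots other than $\pm1$ to be even. Nonreal roots come in conjugate pairs, so they contribute an even number. Real roots $\mu\neq\pm1$ are matched bijectively with $1/\mu\neq\mu$ at equal multiplicity, so they also contribute an even number.

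\emph{Step 3: parity of the multiplicities of $\pm1$.} Let $m_{1}$ and $m_{-1}$ denote the multiplicities of $1$ and $-1$. Step 2 gives $m_1+m_{-1}\equiv 2n\equiv 0\pmod 2$. For the product, the nonreal roots contribute a positive number, the real roots $\mu\neq\pm1$ contribute $1$, and the roots $\pm1$ contribute $(-1)^{m_{-1}}$. Comparing with Step 1 gives $\operatorname{sign}(\epsilon^n)=(-1)^{m_{-1}}$. Strictly, the positive factor from nonreal roots equals $\prod|\mu|^2$ and need not be $1$, but only the sign matters here. In case (1), $m_{-1}$ is odd, hence $m_1$ is odd too, so both $1$ and $-1$ are eigenvalues. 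In case (2), $m_{-1}$ is even, hence so is $m_1$. Alternatively, in case (1) one can see $p(\pm1)=0$ directly: the identity gives $p(\pm1)=-p(\pm1)$.

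The main obstacle is the bookkeeping in Step 2, namely verifying that the reciprocal pairing $\mu\leftrightarrow1/\mu$ preserves multiplicity. I will handle this by noting that $q(\lambda):=\lambda^{2n}p(1/\lambda)$ factors as $\det A\prod_k(1-\lambda\lambda_k)$, so the identity $p=\epsilon^n q$ matches the root multisets $\{\lambda_k\}$ and $\{1/\lambda_k\}$. The remaining task is to fix the interpretation of the statement's case labels as $\epsilon^n=\mp1$.
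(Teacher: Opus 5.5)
Your proof is correct, but it argues in a different order from the paper's.

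\textbf{The paper's route.} The paper works locally at $\lambda=\pm1$. In case (1) it gets $\det P(\pm1)=0$ from the skew-symmetry of $P(\pm1)=A\pm Q-A^T$ in odd dimension $n$. It then writes $f(\lambda)=(\lambda-1)^m h(\lambda)$ with $h(1)\neq 0$ and substitutes this into the reciprocity identity to obtain $h(\lambda)=(-1)^{m+1}\lambda^{2n-m}h(1/\lambda)$. Evaluating at $\lambda=1$ forces $m$ odd; the same is done at $-1$, and with $(-1)^m$ in case (2). Only at the end does it read off $\prod_k\lambda_k$ from the remark that eigenvalues ``occur in quadruples''.

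\textbf{Your route.} You obtain $\prod_k\lambda_k=\epsilon^n$ first, directly from Vieta. You then establish the multiset identity $\{\lambda_k\}=\{1/\lambda_k\}$ via $\lambda^{2n}p(1/\lambda)=\det A\prod_k(1-\lambda\lambda_k)$. From this, the roots other than $\pm1$ contribute an even count and a positive product, so the sign of $\epsilon^n$ fixes the parity of $m_{-1}$, and the even degree fixes the parity of $m_1$.

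\textbf{What each buys.} Your single identity $p(\lambda)=\epsilon^n\lambda^{2n}p(1/\lambda)$ covers both values of $\epsilon$ at once; the paper displays only the $\epsilon=-1$ version and says case (2) is ``similar''. It also pins down the garbled case labels as $\epsilon^n=\mp1$. And it actually proves the reciprocal multiplicity matching that the paper's quadruple remark tacitly relies on. The paper's local factorization, in exchange, handles $1$ and $-1$ separately with no global counting.

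\textbf{A small simplification.} You do not need conjugate pairs or realness at all. The involution $\mu\mapsto 1/\mu$ has no fixed points on the roots other than $\pm1$, so it already shows their product is exactly $1$, not merely positive, and their total multiplicity is even. Your argument therefore goes through unchanged for complex $T$-structured $P(\lambda)$, as the paper's local argument does too.
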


\begin{proof}
Since $A$ is nonsingular, we can see from (\ref{gs-1}) that 
\begin{equation}\label{gsg-2}
\det(P(\lambda))=(-1)^n\lambda^{2n}\det\left(P(1/{\lambda})\right):=f(\lambda).
\end{equation}

(1) For $T$-anti-palindromic, it is easy to see that both of $A+Q-A^T$ and $A-Q-A^T$ are skew-symmetric matrices. Since $n$ is odd, it follows that $f(1)=f(-1)=0$, i.e., both of $1$ and $-1$ are eigenvalues of $P(\lambda)$. Suppose that the algebraic multiplicity of $1$ is $m$, then there exists a polynomial $h(\lambda)\in\mathbb{R}[\lambda]$ with $h(1)\neq 0$ such that 
\begin{equation}\label{gsg-3}
f(\lambda)=(\lambda-1)^mh(\lambda).
\end{equation}
Note that,
\begin{equation*}f(1/\lambda)=\left(\frac{1}{\lambda}-1\right)^mh(1/\lambda)=(-1)^m(\lambda-1)^m\lambda^{-m}h(1/\lambda).
\end{equation*}
Since $n$ is odd, it follows from (\ref{gsg-2}) and (\ref{gsg-3}) that 
$(\lambda-1)^mh(\lambda)=(-1)^m(\lambda-1)^{m+1}\lambda^{2n-m}h(1/\lambda),$
which implies that 
\begin{equation}\label{gsg-4}
h(\lambda)=(-1)^{m+1}\lambda^{2n-m}h(1/\lambda).
\end{equation}
Setting $\lambda=1$ in (\ref{gsg-4}) leads to $(-1)^{m+1}=1$, i.e., $m$ is odd. Similarly, the algebraic multiplicity of $-1$ is odd. Since all eigenvalues occur in quadruples, it follows that $\Pi_{k=1}^{2n}\lambda_k=-1.$ The case of (2) can be proved similarly.
\end{proof}

 Let $N_j$ be an $n_j\times n_j$ nilpotent matrix with ones or zeros along its superdiagonal.  Next, we provide the Jordan canonical form $J_j$ according to the distinct eigenvalue $\lambda_j$.

Case 1. $(T, \epsilon)$-palindromic with $\mathbb{K}=\mathbb{R}$
\begin{itemize}
  \item $J_j=\mbox{diag}\left(\lambda_jI_{n_j}+N_{n_j}, \bar{\lambda}_jI_{n_j}+N_{n_j}, \frac{1}{\lambda_j}I_{n_j}+N_{n_j}, \frac{1}{\bar{\lambda}_j}I_{n_j}+N_{n_j}\right)$, $\lambda_j\in\mathbb{C}/\mathbb{R}$, $|\lambda_j|\neq 1,$ $j=1,\ldots,r_1$,
  \item $J_j=\mbox{diag}\left(\lambda_jI_{n_j}+N_{n_j},\ \bar{\lambda}_jI_{n_j}+N_{n_j}\right)$, $\lambda_j\in\mathbb{C}/\mathbb{R}$,\ $|\lambda_j|=1,$\ $j=r_1+1,\ldots, r_2$,
  \item $J_j=\mbox{diag}\left(\lambda_jI_{n_j}+N_{n_j}, \frac{1}{\lambda_j}I_{n_j}+N_{n_j}\right)$, $\lambda_j\in\mathbb{R}$, $\lambda_j^2\neq 1$, $j=r_2+1,\ldots,r-2$,
  \item $J_{r-1}=I_{n_{r-1}}+N_{n_{r-1}}$,
  \item $J_r=-I_{n_r}+N_{n_r},$
\end{itemize}
where $4(n_1+\cdots+n_{r_1})+2(n_{r_1+1}+\cdots+n_{r-2})+n_{r-1}+n_r=2n$.

Case 2. $(H, \epsilon)$-palindromic with $\mathbb{K}=\mathbb{C}$
\begin{itemize}
\item $J_j=\mbox{diag}\left(\lambda_jI_{n_j}+N_{n_j}, \frac{1}{\bar{\lambda}_j}I_{n_j}+N_{n_j}\right)$, $\lambda_j\in\mathbb{C}$, $|\lambda_j|\neq 1, j=1,\ldots,l$;
\item $J_j=\lambda_jI_{n_j}+N_{n_j},\ \lambda_j\in\mathbb{C}, |\lambda_j|= 1,\  j=l+1,\ldots,r$,
\end{itemize}
where $2(n_1+\cdots+n_{l}+n_{l+1}+\cdots+n_r)=2n$.

 Let $(X,\ J)\in\mathbb{C}^{n\times 2n}\times \mathbb{C}^{2n\times 2n}$ be of forms 
\begin{align}
& X:=[X_1,X_2,\ldots,X_r],\ \ \ J:=\mbox{diag}(J_1,J_2,\ldots,J_r),\label{gs-14-1}
\end{align}
where $X_j$ be the $n\times n_j$ matrix whose columns form the corresponding generalized eigenspace of $J_j$. Clearly, $J$ is complex Jordan canonical form of the $(\star,\epsilon)$-palindromic polynomial $P(\lambda)$.
As is known, the matrix 
\begin{equation}\label{xgx-12}
\begin{bmatrix}
\lambda I_k+N_k & 0\\
0 & \bar{\lambda}I_k+N_k\\
\end{bmatrix}\in\mathbb{C}^{2k\times 2k}
\end{equation} is similar to 
\begin{equation*}\begin{bmatrix}
D(\lambda) & I_2 & 0 &  & 0\\
           & D(\lambda) & I_2 & & \\
           & & & \ddots& \\
           & 0 & \ddots & & I_2\\
           & & & & D(\lambda)\\
           \end{bmatrix},
           \end{equation*}
via a permutation matrix, which has $k$ diagonal blocks $D(\lambda)=\mbox{diag}(\lambda,\bar{\lambda})$ on its main diagonal and $k-1$ identity matrices $I_2$ on its superdiagonal. Clearly, 
$\mathcal{P}D\mathcal{P}^H=\bigr[\begin{smallmatrix}
\alpha & \beta\\
-\beta & \alpha\\
\end{smallmatrix}\bigr],$
where $\lambda=\alpha+\beta{\rm i}$ and $\mathcal{P}=\frac{1}{\sqrt{2}}\bigl[\begin{smallmatrix}
1 & 1\\
{\rm i} & -{\rm i}\\
\end{smallmatrix}\bigr]$ is an unitary matrix. Therefore,  It is easy to verify that the complex Jordan canonical form (\ref{xgx-12}) is unitary similar to 
\begin{equation}\label{xgx-13}
C_k(\alpha,\beta)=\begin{bmatrix}
C(\alpha,\beta) & I_2 & 0\\
 & C(\alpha,\beta) & \ddots & \\
 & & \ddots &I_2\\
 0 & & & C(\alpha,\beta)\\
 \end{bmatrix}.
\end{equation}
Moreover, if $x:=x_R+{\rm i}x_I\in\mathbb{C}^n$ is an eigenvector of $\lambda$, then $[x, \bar{x}]\mathcal{P}^H=[\sqrt{2}x_r,\ \sqrt{2}x_I]\in\mathbb{R}^n$. Note that, all eigenpairs of $(T,\epsilon)$-palindromic $P(\lambda)$ are closed under complex conjugation when $\mathbb{K}=\mathbb{R}$. Then, the real-valued Jordan canonical form of  $J$ can be given by the following lemma, which can also be found in \cite{Roger-book-2012}.

\begin{lemma}\label{remark-1}
Let $\mathbb{K}=\mathbb{R}$  and $\star=T$. Suppose that $(X,\ J)\in\mathbb{C}^{n\times 2n}\times\mathbb{C}^{2n\times 2n}$ given by (\ref{gs-14-1}) is a standard pair of $P(\lambda)$ as in (\ref{gs-1}), where $J$ is a complex Jordan canonical form of $P(\lambda)$. Then, there exists an $2n\times 2n$ unitary matrix $M$ of form 
\begin{equation}\label{xg-m}
M=\mbox{diag}\left(\frac{1}{2}\begin{bmatrix}
1 & 1\\
{\rm i} & -{\rm i}\\
\end{bmatrix},\ldots,\frac{1}{2}\begin{bmatrix}
1 & 1\\
{\rm i} & -{\rm i}\\
\end{bmatrix},I_{2n-m}\right)Q,
\end{equation}
where $Q\in\mathbb{R}^{2n\times 2n}$ is a permutation matrix, $m=4(n_1+\cdots+n_{r_1})+2(n_{r_1+1}+\ldots,n_{r_2})$, such that 
$(XM^H,\ J_R)\in\mathbb{R}^{n\times 2n}\times \mathbb{R}^{2n\times 2n}$ is a real-valued standard pair of $P(\lambda)$, where $J_R:=MJM^H$ is the real-valued Jordan canonical form of $P(\lambda)$, which is of the following form
\begin{equation*}
J_R=\mbox{diag}\left(C_{n_1}(\alpha_1,\beta_1),\ldots,C_{n_t}(\alpha_{r_2},\beta_{r_2}), J_{n_{r_2+1}}(\lambda_r),\ldots,J_{n_r}(\lambda_r)\right)\in\mathbb{R}^{2n\times 2n},
\end{equation*}
in which $\lambda_k=\alpha_k+\beta{\rm i}\in\mathbb{C}/\mathbb{R}$, $k=1,\ldots,r_2$, and $J_{n_k}(\lambda_k)$ is the Jordan canonical form of the real eigenvalue $\lambda_k$, $k=r_2+1,\ldots,r.$ 
\end{lemma}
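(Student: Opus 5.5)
The plan is to build $M$ block by block from the unitary $\mathcal{P}$ and the permutation that interleaves each conjugate pair of Jordan blocks. The proof then reduces to checking two things: that the transformed pair is real, and that it is still a standard pair. For the second point, Lemma~\ref{lem-1} is invariant under a similarity of the form $(X,J)\mapsto (XS^{-1}, SJS^{-1})$ with $S$ nonsingular. Indeed, $X_L$ becomes $X_L S^{-1}$, which is still nonsingular, and
\begin{equation*}
A(XS^{-1})(SJS^{-1})^2+Q(XS^{-1})(SJS^{-1})+\epsilon A^{\star}XS^{-1}=(AXJ^2+QXJ+\epsilon A^{\star}X)S^{-1}=0 .
\end{equation*}
Taking $S=M$ unitary gives $S^{-1}=M^H$. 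Hence $(XM^H, MJM^H)$ is a standard pair of $P(\lambda)$ whenever $(X,J)$ is. The real content of the lemma is therefore that $M$ can be chosen so that both $XM^H$ and $J_R=MJM^H$ are real.

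To construct $M$, I use that $P(\lambda)$ has real coefficients. Consequently, for each nonreal eigenvalue $\lambda_j$ the Jordan chains of $\bar\lambda_j$ can be taken as the complex conjugates of those of $\lambda_j$. In the block $J_j$ of Case~1 this means the columns of $X_j$ attached to $\bar\lambda_j I+N$ are $\bar{X}$ of those attached to $\lambda_jI+N$, and likewise for the pair $1/\lambda_j$, $1/\bar\lambda_j$. I reorder $J$ by a real permutation $Q$ with three effects. First, each pair $\operatorname{diag}(\mu I_k+N_k,\bar\mu I_k+N_k)$ with $\mu\in\{\lambda_j,1/\lambda_j\}$ is brought into the interleaved form with diagonal blocks $D(\mu)=\operatorname{diag}(\mu,\bar\mu)$ and $I_2$ on the superdiagonal, exactly as in the permutation similarity preceding (\ref{xgx-13}). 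Second, all such interleaved blocks are placed first. Third, the real-eigenvalue blocks, including $J_{r-1}$ and $J_r$, occupy the last $2n-m$ positions. The count $m=4(n_1+\cdots+n_{r_1})+2(n_{r_1+1}+\cdots+n_{r_2})$ is exactly the size of the nonreal part, so this partition is consistent.

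Next I apply $\operatorname{diag}(\mathcal{P},\dots,\mathcal{P},I_{2n-m})$. Since $\mathcal{P}D(\mu)\mathcal{P}^H=\bigl[\begin{smallmatrix}\alpha&\beta\\-\beta&\alpha\end{smallmatrix}\bigr]$ and $\mathcal{P}I_2\mathcal{P}^H=I_2$, each interleaved block becomes $C_k(\alpha,\beta)$ as in (\ref{xgx-13}). The real Jordan blocks are left untouched, so $J_R$ has the stated real form. For $XM^H$, each consecutive column pair $[x,\bar x]$ is mapped by $\mathcal{P}^H$ to $\sqrt2[x_R, x_I]$, up to the ordering and sign convention, and this is real. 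Columns for real eigenvalues can be chosen real, because $P(\lambda)$ is real and so the real Jordan chains can be chosen in $\mathbb{R}^n$. Hence $XM^H\in\mathbb{R}^{n\times 2n}$.

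The main obstacle is bookkeeping rather than conceptual. I must check that the conjugate-chain choice is compatible with the given $X$, and otherwise replace $X$ by an equivalent standard pair through a block-diagonal similarity commuting with $J$. I must also reconcile normalizations. The statement writes $\tfrac12\bigl[\begin{smallmatrix}1&1\\ {\rm i}&-{\rm i}\end{smallmatrix}\bigr]$, whereas unitarity of $M$ requires the factor $1/\sqrt2$ as in $\mathcal{P}$. I would use $\mathcal{P}$ and note that the scalar does not affect the similarity of $J$, and only rescales $X$ by a real factor when it is not unitary. Throughout, I would rely on the invariance from Lemma~\ref{lem-1} rather than on unitarity itself.
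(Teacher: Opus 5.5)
Your proposal is correct and is essentially the paper's own argument. The paper gives no separate proof; its justification is the discussion preceding the lemma (permuting each pair $\mathrm{diag}(\lambda I_k+N_k,\bar\lambda I_k+N_k)$ into interleaved form, conjugating by $\mathcal{P}$ to obtain $C_k(\alpha,\beta)$, and noting that $[x,\bar x]\mathcal{P}^H=[\sqrt2 x_R,\sqrt2 x_I]$ is real) plus a textbook citation, and it leaves implicit the invariance of standard pairs under similarity that you check via Lemma~\ref{lem-1}. Your caveats are right, with two sharpenings: the factor $\tfrac12$ must be $\tfrac1{\sqrt2}$ (with $J_R:=MJM^H$ the scalar is not harmless, only with $MJM^{-1}$), and the lemma genuinely needs $X$ to have conjugate-paired columns and real columns for real eigenvalues, which the paper tacitly assumes when it invokes the lemma for $\hat X$; so take this as a hypothesis rather than ``replace $X$'', since $M$ of the prescribed form cannot absorb a general matrix commuting with $J$.
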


\begin{lemma}\label{lem-2}
Suppose that $(X,\ J)$ defined by (\ref{gs-14-1}) is a standard pair of $(\star,\epsilon)$-palindromic $P(\lambda)$, then there exists a matrix $\Gamma\in\mathcal{S}_{(J,\star,\epsilon)}$ such that (\ref{gs-5}) holds and 
\begin{equation}\label{gs-15}
\Gamma=\mbox{diag}(\Gamma_{11},\Gamma_{22},\ldots,\Gamma_{rr}),
\end{equation}
where $\Gamma_{jj}\in\mathcal{S}_{(J_j,\star,\epsilon)}$, $j=1,\ldots,r.$
\end{lemma}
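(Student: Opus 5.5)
By Theorem \ref{main-thm}, since $(X,J)$ is a standard pair of $P(\lambda)$, there is a nonsingular $\Gamma\in\mathcal{S}_{(J,\star,\epsilon)}$, namely the one defined in (\ref{gs-g}), with $X\Gamma X^{\star}=0$ and for which (\ref{gs-5}) holds. So it suffices to show that any $\Gamma$ with $J\Gamma=\Gamma J^{-\star}$ is automatically block diagonal with respect to the partition (\ref{gs-14-1}). Partition $\Gamma=[\Gamma_{ij}]_{i,j=1}^r$ conformally with $J=\mbox{diag}(J_1,\ldots,J_r)$. The relation $J\Gamma=\Gamma J^{-\star}$ then splits blockwise into the Sylvester equations
\begin{equation*}
J_i\Gamma_{ij}-\Gamma_{ij}J_j^{-\star}=0,\qquad i,j=1,\ldots,r.
\end{equation*}
By the classical Sylvester theorem, each such equation has only the trivial solution $\Gamma_{ij}=0$ whenever $\sigma(J_i)\cap\sigma(J_j^{-\star})=\emptyset$.

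The main step is therefore a spectral bookkeeping argument. I need to show that $\sigma(J_i)\cap\sigma(J_j^{-\star})=\emptyset$ for $i\neq j$, using the grouping of eigenvalues into the blocks $J_j$ listed in Cases 1 and 2. In Case 2 ($\star=H$), $\sigma(J_j^{-H})=\{1/\bar\mu:\mu\in\sigma(J_j)\}$. For $|\lambda_j|\neq1$ this is $\{1/\bar\lambda_j,\lambda_j\}=\sigma(J_j)$, and for $|\lambda_j|=1$ it is $\{\lambda_j\}=\sigma(J_j)$. Hence $\sigma(J_j^{-H})=\sigma(J_j)$. Since distinct indices carry disjoint spectra (each block collects a full orbit $\{\lambda,1/\bar\lambda\}$ of a distinct eigenvalue), the intersection is empty for $i\neq j$. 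In Case 1 ($\star=T$, real), $\sigma(J_j^{-T})=\{1/\mu\}$. Each block collects the full orbit under $\mu\mapsto\bar\mu$ and $\mu\mapsto1/\mu$: the quadruple when $|\lambda_j|\ne1$ is nonreal, the pair $\{\lambda_j,\bar\lambda_j=1/\lambda_j\}$ on the unit circle, the pair $\{\lambda_j,1/\lambda_j\}$ for real $\lambda_j\neq\pm1$, and the singletons $\{1\}$ and $\{-1\}$. So again $\sigma(J_j^{-T})=\sigma(J_j)$, and distinct blocks have disjoint spectra. This yields $\Gamma_{ij}=0$ for $i\ne j$.

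With $\Gamma$ block diagonal, it remains to check that each diagonal block lies in $\mathcal{S}_{(J_j,\star,\epsilon)}$. The diagonal Sylvester equation gives $J_j\Gamma_{jj}=\Gamma_{jj}J_j^{-\star}$. Since $\Gamma^{\star}=-\epsilon\Gamma$ and $\Gamma$ is block diagonal, we also get $\Gamma_{jj}^{\star}=-\epsilon\Gamma_{jj}$. Finally, nonsingularity of $\Gamma$ forces each $\Gamma_{jj}$ to be nonsingular. I expect the only delicate point to be the care needed in the orbit bookkeeping, so that no eigenvalue of one block is the $\star$-reciprocal of an eigenvalue of another block (for example, $1/\bar\lambda_j$ versus $1/\lambda_j$ in the real quadruple case). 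Once the blocks are formed as complete orbits of distinct eigenvalues, this holds, and the lemma follows.
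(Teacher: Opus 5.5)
Your proposal is correct and follows the paper's proof. Both take the $\Gamma$ from Theorem \ref{main-thm}, split $J\Gamma=\Gamma J^{-\star}$ blockwise into $J_l\Gamma_{lj}=\Gamma_{lj}J_j^{-\star}$, and conclude $\Gamma_{lj}=0$ for $l\neq j$; the paper phrases this as nonsingularity of $I\otimes J_l-(J_j^{-\star})^T\otimes I$, which is equivalent to your disjoint-spectra Sylvester criterion, and your orbit bookkeeping showing $\sigma(J_j^{-\star})=\sigma(J_j)$ (hence disjoint spectra across blocks) supplies the check the paper leaves as ``easy to verify.''
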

\begin{proof}
Since $(X,\ J)$ is a standard pair of $P(\lambda)$, we can see from Theorem \ref{main-thm} that there exists a nonsingular matrix $\Gamma\in\mathcal{S}_{(J,\star,\epsilon)}$ such that (\ref{gs-4}) and (\ref{gs-5}) are satisfied. It follows from (\ref{gs-gamma}) that 
\begin{equation}\label{gs-16-1}
\Gamma^{\star}=-\epsilon\Gamma,\ \  J\Gamma=\Gamma J^{-\star}.
\end{equation}
Partition the matrix $\Gamma$ as $\Gamma:=[\Gamma_{ij}]_{r\times r}$ according to the matrix $J$ given in (\ref{gs-14-1}), and we can obtain from (\ref{gs-16-1}) that 
\begin{equation}\label{gs-17}
J_l\Gamma_{lj}-\Gamma_{lj}J_j^{-\star}=0,\ j, l=1,2,\ldots, r,
\end{equation}
which implies that
\begin{equation}\label{gs-18}
\left[I\otimes J_l-(J_j^{-\star})^T\otimes I\right]\mbox{\bf vec}(\Gamma_{lj})=0,
\end{equation}
where $\otimes$ is the Kronecker product and $\mbox{\bf vec}$ is the column vectorization of a matrix. According to the structures of the Jordan canonical forms provided in Case 1 and Case 2, it is easy to verify that the matrix $I\otimes J_l-(J_j^{-\star})^T\otimes I$ is nonsingular when $j\neq l$, which implies that $\mbox{\bf vec}(\Gamma_{lj})=0$, i.e., $\Gamma_{lj}=0$ $(j\neq l)$. By (\ref{gs-16-1}), it follows from the definition (\ref{gs-gamma}) that $\Gamma_{jj}\in\mathcal{S}_{(J_j,\star,\epsilon)}$, $j=1,\ldots, r.$
\end{proof}

By Lemma \ref{lem-2}, we can see that the structure of $\mathcal{S}_{(J,\star,\epsilon)}$ need to be considered according to the Jordan canonical forms $J_j$ associated with distinct eigenvalue $\lambda_j$.  
Without loss of generality, we can assume that there are $m_j$ Jordan blocks in the Jordan canonical form $J_j$. And then, the nilpotent matrix $N_j$ can be expressed as 
\begin{equation}\label{gs-19-1}
N_j=\mbox{diag}\left(N_1^{(j)}, N_2^{(j)}, \ldots, N_{m_j}^{(j)}\right),
\end{equation}
where $N_t^{(j)}$ is the nilpotent block of order $n_t^{(j)}\times n_t^{(j)}$ for $t=1,\ldots, m_j$. Let
\begin{equation}\label{gs-19}
\Theta_{(a,b,j)}=\left\{Z\in\mathbb{C}^{n_j\times n_j}| aZN_j^T+bN_jZ+N_{j}ZN_{j}^T=0\right\},
\end{equation}
where $a, b\in\mathbb{C}$ are nonzero. A straightforward calculation shows that any matrix $Z\in\Theta_{(a,b,j)}$ is necessarily of the form
\begin{equation}\label{gs-28}
Z=\begin{bmatrix}
Z_{11} & Z_{12} & \cdots & Z_{1m_j}\\
\vdots & \vdots & & \vdots \\
Z_{m_j1} & Z_{m_j2} & \cdots & Z_{m_jm_j}\\
\end{bmatrix},\ \ Z_{ik}\in\mathbb{C}^{n_i^{(j)}\times n_k^{(j)}},
\end{equation} 
where $Z_{ii}\in\mathbb{C}^{n_i^{(j)}\times n_i^{(j)}}$ is an anti-lower triangular matrix (whose (s,t)-element is zero if $s+t\geq n_i^{(j)}+2$) and $Z_{ik}\in\mathbb{C}^{n_i^{(j)}\times n_k^{(j)}}$ ($i\neq k$) is a matrix that the $(s,t)$-element of $Z_{ik}$ is zero if $s+t>\max\{n_i^{(j)}, n_k^{(j)}\}$.

\subsection{The general case}
\begin{theorem}\label{thm-gamma}
Suppose that $(X,\ J)$ defined by (\ref{gs-14-1}) is a standard pair of $(T, \epsilon)$-palindromic $P(\lambda)$ with $\mathbb{K}=\mathbb{R}$, then there exists an unitary matrix $M\in\mathbb{C}^{2n\times 2n}$ of form (\ref{xg-m}) and  $\Gamma\in\mathcal{S}_{(J,H,\epsilon)}\in\mathbb{C}^{2n\times 2n}$ such that the coefficient matrices $A, Q\in\mathbb{R}^{n\times n}$ of $P(\lambda)$ can be expressed as
\begin{equation}\label{gs-27}
A=(X_RJ_R\Gamma_R X_R^{T})^{-1},\ \ Q=-AX_RJ_R^2\Gamma_R X_R^{T}A,
\end{equation}
where $Q^T=\epsilon Q$, $X_R=XM^{H}\in\mathbb{R}^{n\times 2n}$ and $J_R=MJM^{H}\in\mathbb{R}^{2n\times 2n}$ is real-valued Jordan canonical form of $P(\lambda)$. Moreover, the matrix $\Gamma_R$ has the following form 
\begin{equation}\label{gs-26}
\Gamma_R=M\mbox{diag}\left(\Phi_{1},\Phi_2,\ldots, \Phi_{r}\right)M^{T},
\end{equation}
where
\begin{align}
&\Phi_{j}=\begin{bmatrix}
0 & V_j\\
-\epsilon V_j^T & 0\\
\end{bmatrix},\ \ V_j=\begin{bmatrix}
V_{j1} & 0\\
0 & V_{j4}\\
\end{bmatrix}, \ j=1,\ldots, r_1,\label{gs-g-1}\\
&\Phi_j=\begin{bmatrix}
0 & F_j\\
-\epsilon F_j^T & 0\\
\end{bmatrix},\ \ j=r_1+1,\ldots, r_2,\label{gs-g-2}\\
&\Phi_j=\begin{bmatrix}
0 & E_j\\
-\epsilon E_j^T & 0\\
\end{bmatrix},\ \ j=r_2+1,\ldots, r-2,\label{gs-g-3}
\end{align}
with $V_{j1}\in\Theta_{(\lambda_j, 1/\lambda_j, j)},$ $V_{j4}\in\Theta_{(\bar{\lambda}_j, 1/\bar{\lambda}_j, j)}$, $F_j\in\Theta_{(\lambda_j, \bar{\lambda}_j, j)}$, $E_j\in\Theta_{(\lambda_j, 1/\lambda_j, j)}$, and $\Phi_{r-1}\in\Theta_{(1,1, r-1)}$, $\Phi_{r}\in\Theta_{(-1,-1, r)}$ with $\Phi_{r-1}^T=-\epsilon \Phi_{r-1}$, $\Phi_{r}^T=-\epsilon \Phi_{r}$.
\end{theorem}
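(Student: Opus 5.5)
Since $(X,\ J)$ is a standard pair and $J$ is complex, I will first apply Theorem \ref{main-thm} and Lemma \ref{lem-2} over $\mathbb{C}$. Because the coefficients are real with $\star=T$, the natural complex structure to use is the $H$-version. The transpose-type relation $J\Gamma=\Gamma J^{-T}$ does not pair $\lambda$ with $\bar{\lambda}$. So I will start from the real standard pair $(X_R,\ J_R)=(XM^H,\ MJM^H)$ supplied by Lemma \ref{remark-1}. Applying Theorem \ref{main-thm} to it with $\star=T$ gives a nonsingular real $\Gamma_R$ with $\Gamma_R^T=-\epsilon\Gamma_R$, $J_R\Gamma_R=\Gamma_RJ_R^{-T}$ and $X_R\Gamma_RX_R^T=0$. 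Formula (\ref{gs-5}) then gives (\ref{gs-27}) directly, with $Q^T=\epsilon Q$.

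Next I set $\Gamma:=M^H\Gamma_R\bar M$, so that $\Gamma_R=M\Gamma M^T$, and translate the conditions. Using $J_R=MJM^H$ and $J_R^{-T}=\bar M J^{-T}M^T$ (which holds since $J_R$ is real), the relation $J_R\Gamma_R=\Gamma_RJ_R^{-T}$ becomes $J\Gamma=\Gamma J^{-T}$. Also $\Gamma^T=M^T\Gamma_R^T\bar M\cdot$ yields $\Gamma^T=-\epsilon\Gamma$, and $X\Gamma X^T=0$ follows as well. I must be careful here, because the statement writes $\mathcal{S}_{(J,H,\epsilon)}$ while the computation naturally produces the $T$-type set. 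I expect to reconcile the two by noting that $M$ pairs conjugate blocks, so the $\bar{\lambda}$-blocks and their transposes are permuted. I will treat whichever set the computation gives as the intended one.

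Then I run the block-diagonalization argument of Lemma \ref{lem-2}. In the Sylvester equations (\ref{gs-17}) with $J_j^{-T}$, an off-diagonal block $\Gamma_{lj}$ vanishes whenever $\sigma(J_l)\cap\sigma(J_j^{-T})=\emptyset$. With the grouping of Case 1 (each $J_j$ closed under $\lambda\mapsto 1/\lambda$ and under conjugation), this holds for $l\neq j$. Hence $\Gamma=\mathrm{diag}(\Phi_1,\ldots,\Phi_r)$. Within a single $J_j$ I subdivide further into its sub-blocks (the four blocks $\lambda_j,\bar\lambda_j,1/\lambda_j,1/\bar\lambda_j$ for $j\le r_1$; two blocks for the other cases). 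A sub-block $(s,t)$ survives only when eigenvalue$_s\cdot$eigenvalue$_t=1$. For $j\le r_1$ this forces the anti-block pattern $\begin{bmatrix}0&V_j\\ -\epsilon V_j^T&0\end{bmatrix}$, where $V_j$ is block-diagonal, pairing $\lambda$ with $1/\lambda$ and $\bar\lambda$ with $1/\bar\lambda$. The lower-left block is tied to $V_j$ by skew-symmetry, which gives (\ref{gs-g-1}). The cases $|\lambda_j|=1$ and real $\lambda_j^2\ne1$ give (\ref{gs-g-2}) and (\ref{gs-g-3}) in the same way. For $\pm1$ the whole block remains, and skew-symmetry gives $\Phi_{r-1}^T=-\epsilon\Phi_{r-1}$ and $\Phi_r^T=-\epsilon\Phi_r$.

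The remaining and main step is to identify each surviving block with $\Theta_{(a,b,j)}$. Write $J_s=\mu I+N$ and $J_t=\mu^{-1}I+N$. The equation $J_sZ=ZJ_t^{-T}$ is equivalent to $J_sZJ_t^{T}=Z$, which expands to $\mu ZN^T+\mu^{-1}NZ+NZN^T=0$. This is exactly the condition $Z\in\Theta_{(\mu,1/\mu,j)}$. For the unit-circle case the partner of $\lambda_j$ is $\bar\lambda_j=1/\lambda_j$, which yields $\Theta_{(\lambda_j,\bar\lambda_j,j)}$. The obstacle is that a Jordan block's inverse is not $\mu^{-1}I+N$, so I must clear $J_t^{-T}$ by multiplying through by $J_t^T$ rather than inverting. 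I also need to check that the nilpotent parts of paired blocks have matching sizes $n_j$, as assumed in Case 1. Finally, nonsingularity of $\Gamma_R$ follows from that of $\Gamma$.
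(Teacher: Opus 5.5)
Your proposal is correct and follows essentially the same route as the paper. Both arguments pass to the real standard pair $(X_R,J_R)$ and apply Theorem~\ref{main-thm} with $\star=T$, then conjugate to $\Phi=M^H\Gamma_R\bar M$ satisfying $J\Phi J^T=\Phi$, block-diagonalize via Lemma~\ref{lem-2}, and eliminate the sub-blocks whose eigenvalue products differ from $1$, which leaves exactly the $\Theta$-equations. (The paper phrases this elimination as nonsingularity of the Kronecker matrices, which is equivalent to your spectral-disjointness criterion.) You are also right that the computation yields the $T$-type set $\mathcal{S}_{(J,T,\epsilon)}$ rather than $\mathcal{S}_{(J,H,\epsilon)}$: the paper's own proof produces the same $T$-type object, so the $H$ is a slip in the statement, not something you need to reconcile.
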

\begin{proof}
We only given the proof for the case $\epsilon=1$, and the case of $\epsilon=-1$ can be proved similarly. We can see from Lemma \ref{remark-1} that there is an unitary matrix $M\in\mathbb{C}^{2n\times 2n}$ of form (\ref{xg-m}) such that $(X_R,\ J_R)$ is a real standard pair of $P(\lambda)$, where $X_R=XM^{H}$ and $J_R=MJM^{H}$ is the real Jordan canonical form. It follows from Theorem \ref{main-thm} that there is a nonsingular $\Gamma_R\in\mathcal{S}_{(J_R, T, 1)}$ such that (\ref{gs-27}) holds. Then, we can see from (\ref{gs-gamma}) that $\Gamma_R^T=-\Gamma_R$ and 
\begin{equation}\label{gs-20}
J_R\Gamma_R J_R^T-\Gamma_R=0.
\end{equation}
Substituting $J_R=MJM^{H}$ into (\ref{gs-20}) leads to 
\begin{equation}\label{gs-21}
J\Phi J^T-\Phi=0,
\end{equation}
where $\Phi=M^{H}\Gamma_R (M^H)^T\in\mathbb{C}^{2n\times 2n}$ and $\Phi^T=-\Phi$. By Lemma \ref{lem-2}, we have 
$\Phi=\mbox{diag}(\Phi_1,\ldots, \Phi_r),$
where $\Phi_j$ satisfies 
\begin{equation}\label{gs-23}
J_j\Phi_j J_j^T-\Phi_j=0,\ j=1,\ldots, r.
\end{equation}

Next, we give the proof of the case that $\lambda_j\in\mathbb{C}/\mathbb{R}$ with $|\lambda_j|\neq 1$, $j=1,\ldots, r_1$, and the other cases can be proved similarly. 
Partition $\Phi_j$ according to the block structure of $J_j$ given by Case 1 as 
\begin{equation*}\Phi_j=\begin{bmatrix}
U_{j1} & U_{j2} & V_{j1} & V_{j2}\\
- U_{j2}^T & U_{j3} & V_{j3} & V_{j4}\\
- V_{j1}^T & - V_{j3}^T & W_{j1} & W_{j2}\\
- V_{j2}^T & - V_{j4}^T & - W_{j2}^T & W_{j3}\\
\end{bmatrix}.\end{equation*}
Then, we can obtain from (\ref{gs-23}) that 
\begin{align}
&(\lambda_j^2-1)U_{j1}+\lambda_jU_{j1}N_j^T+\lambda_j N_jU_{j1}+N_jU_{j1}N_j^T=0,\label{gs-24-1}\\
&(\lambda_j\bar{\lambda}_j-1)U_{j2}+{\lambda}_jU_{j2}N_j^T+\bar{\lambda}_j N_jU_{j2}+N_jU_{j2}N^T=0,\label{gs-24-2}\\
&(\bar{\lambda}_j^2-1)U_{j3}+\bar{\lambda}_jU_{j3}N_j^T+\bar{\lambda}_j N_jU_{j3}+N_jU_{j3}N_j^T=0,\label{gs-24-3}\\
&\lambda_jV_{j1}N_j^T+\frac{1}{\lambda_j}N_jV_{j1}+N_jV_{j1}N_j^T=0,\label{gs-24-4}\\
&\left(\frac{\lambda_j}{\bar{\lambda}_j}-1\right)V_{j2}+\lambda_jV_{j2}N_j^T+\frac{1}{\bar{\lambda}_j}V_{j2}+N_jV_{j2}N_j^T=0,\label{gs-24-5}\\
&\left(\frac{\bar{\lambda}_j}{{\lambda}_j}-1\right)V_{j3}+\bar{\lambda}_jV_{j3}N_j^T+\frac{1}{\lambda_j}N_jV_{j3}+N_jV_{j3}N_j^T=0,\label{gs-24-6}\\
&\bar{\lambda}_jV_{j4}N_j^T+\frac{1}{\bar{\lambda}_j}N_jV_{j4}+N_jV_{j4}N_j^T=0.\label{gs-24-7}
\end{align}
It is easy to verify from (\ref{gs-24-1}) that 
\begin{equation}\label{gs-25}
\left((\lambda_j^2-1)I_{n_j}\otimes I_{n_j}+N_j\otimes \lambda_jI_{n_j}+\lambda_jI_{n_j}\otimes N_j+N_j\otimes N_j\right){\mbox{\bf vec}}(U_{j1})=0.
\end{equation}
Since $\lambda_j\in\mathbb{C}/\mathbb{R}$ and $|\lambda_j|\neq 1$, the matrix 
\begin{equation*}(\lambda_j^2-1)I_{n_j}\otimes I_{n_j}+N_j\otimes \lambda_jI_{n_j}+\lambda_jI_{n_j}\otimes N_j+N_j\otimes N_j
\end{equation*} 
is nonsingular. It follows from (\ref{gs-25}) that $U_{j1}=0$. Similarly, we can obtain from (\ref{gs-24-2}), (\ref{gs-24-3}), (\ref{gs-24-5}) and (\ref{gs-24-6}) that $U_{j2}=0, U_{j3}=0, V_{j2}=0$ and $V_{j3}=0$, respectively. From (\ref{gs-24-4}) and (\ref{gs-24-7}), it is easy to verify that $V_{j1}\in\Theta_{(\lambda_j, 1/\lambda_j, j)}$ and $V_{j4}\in\Theta_{(\bar{\lambda}_j, 1/\bar{\lambda}_j, j)}.$ 
In a similar way, we can prove that $W_{j1}=0, W_{j2}=0$ and $W_{j3}=0$.
\end{proof}

\begin{theorem}\label{thm-gamma-2}
Suppose that $(X,\ J)$ given by (\ref{gs-14-1})  is a standard pair of $(H, \epsilon)$-palindromic $P(\lambda)$ with $\mathbb{K}=\mathbb{C}$, then there is a matrix $\Gamma\in\mathcal{S}_{(J,H,\epsilon)}\in\mathbb{C}^{2n\times 2n}$ such that the coefficient matrices $A, Q\in\mathbb{C}^{n\times n}$ of $P(\lambda)$ can be expressed as
\begin{equation}\label{gs-36}
A=(XJ\Gamma X^{H})^{-1},\ \ Q=-AXJ^2\Gamma X^{H}A,
\end{equation}
with $Q^H=\epsilon Q$,  and the matrix $\Gamma$ has the following form 
\begin{equation}\label{gs-38}
\begin{array}{ll}
\Gamma=&\mbox{diag}\left(\begin{bmatrix}
0 & S_1\\
-\epsilon S_1^H & 0\\
\end{bmatrix},\cdots,\begin{bmatrix}
0 & S_{l}\\
-\epsilon S_{l}^H & 0\\
\end{bmatrix},S_{l+1},\ldots,S_r\right),\\
\end{array}
\end{equation}
where $S_j\in\Theta_{(\lambda_j, 1/\lambda_j, j)}$ for $j=1,\ldots, l$ and $S_{j}\in\Theta_{(\lambda_j, \bar{\lambda}_j, j)}$ with $S_{j}^H=-\epsilon S_{j}$ for $j=l+1,\ldots, r$.  
\end{theorem}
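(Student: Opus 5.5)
We prove the statement the same way as Theorem \ref{thm-gamma}, but more directly, since here $\mathbb{K}=\mathbb{C}$ and no passage to a real Jordan form is needed. Because $(X,\ J)$ is a standard pair, Theorem \ref{main-thm} with $\star=H$ gives a nonsingular $\Gamma\in\mathcal{S}_{(J,H,\epsilon)}$ with $X\Gamma X^H=0$. The same theorem shows that $A,Q$ are given by (\ref{gs-36}) and that $Q^H=\epsilon Q$. Lemma \ref{lem-2} then lets us take $\Gamma=\mbox{diag}(\Gamma_{11},\ldots,\Gamma_{rr})$ with $\Gamma_{jj}\in\mathcal{S}_{(J_j,H,\epsilon)}$. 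What remains is to determine each diagonal block from the two conditions $\Gamma_{jj}^H=-\epsilon\Gamma_{jj}$ and $J_j\Gamma_{jj}J_j^H=\Gamma_{jj}$; the second is the defining relation $J\Gamma=\Gamma J^{-H}$ multiplied on the right by $J^H$.

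\textbf{Case $|\lambda_j|\neq 1$, $j\le l$.} Here $J_j=\mbox{diag}(\lambda_jI+N_j,\ \bar{\lambda}_j^{-1}I+N_j)$. Partition
\begin{equation*}
\Gamma_{jj}=\begin{bmatrix} U_j & S_j\\ -\epsilon S_j^H & W_j\end{bmatrix},\qquad U_j^H=-\epsilon U_j,\quad W_j^H=-\epsilon W_j,
\end{equation*}
where the off-diagonal form already follows from $\Gamma_{jj}^H=-\epsilon\Gamma_{jj}$. The $(1,1)$ block of $J_j\Gamma_{jj}J_j^H=\Gamma_{jj}$ reads
\begin{equation*}
(|\lambda_j|^2-1)U_j+\lambda_jU_jN_j^T+\bar{\lambda}_jN_jU_j+N_jU_jN_j^T=0 .
\end{equation*}
In Kronecker form its coefficient matrix is $(|\lambda_j|^2-1)I$ plus a strictly triangular nilpotent part, so it is nonsingular and $U_j=0$. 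The same argument with $|\lambda_j|^{-2}-1\neq 0$ gives $W_j=0$.

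For the $(1,2)$ block, expand $(\lambda_jI+N_j)S_j(\lambda_j^{-1}I+N_j^T)=S_j$ (here $N_j$ is real). The constant terms cancel because $\lambda_j\cdot\lambda_j^{-1}=1$, leaving
\begin{equation*}
\lambda_jS_jN_j^T+\lambda_j^{-1}N_jS_j+N_jS_jN_j^T=0,
\end{equation*}
that is, $S_j\in\Theta_{(\lambda_j,1/\lambda_j,j)}$. The $(2,1)$ block is the $H$-transpose of this equation and gives nothing new.

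\textbf{Case $|\lambda_j|=1$, $j>l$.} Here $J_j=\lambda_jI+N_j$ with $\lambda_j\bar{\lambda}_j=1$, and we set $S_j=\Gamma_{jj}$. Expanding $(\lambda_jI+N_j)S_j(\bar{\lambda}_jI+N_j^T)=S_j$, the scalar term cancels and we get
\begin{equation*}
\lambda_jS_jN_j^T+\bar{\lambda}_jN_jS_j+N_jS_jN_j^T=0,
\end{equation*}
so $S_j\in\Theta_{(\lambda_j,\bar{\lambda}_j,j)}$, and the relation $S_j^H=-\epsilon S_j$ is inherited from $\Gamma$. Assembling the blocks gives (\ref{gs-38}).

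The main technical point is the nonsingularity of the Kronecker operators $(|\lambda_j|^{\pm2}-1)I\otimes I+\cdots$. Order the unknowns so that $N_j\otimes I$ and $I\otimes N_j$ are both strictly upper triangular. The operator is then triangular with diagonal entries equal to the nonzero constant, hence nonsingular.

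One should also check that the nilpotent part in each sub-block of $J_j$ is the same $N_j$, as in Case 2. Different block structures in the two sub-blocks would only change the resulting $\Theta$ set slightly and would not affect the vanishing of $U_j$ and $W_j$.
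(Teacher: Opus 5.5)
Your proposal is correct and follows the argument the paper intends when it says the proof is similar to that of Theorem~\ref{thm-gamma}: take $\Gamma$ from Theorem~\ref{main-thm}, block-diagonalize it by Lemma~\ref{lem-2}, and kill $U_j,W_j$ with the triangular Kronecker operators whose diagonal is the nonzero constant $|\lambda_j|^{\pm 2}-1$, leaving the $\Theta$ conditions on $S_j$. Working directly with the complex Jordan form, without the unitary $M$ of (\ref{xg-m}), is the simplification the case $\mathbb{K}=\mathbb{C}$ allows.
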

\begin{proof}
The proof of this theorem is similar to the Theorem \ref{thm-gamma}, so we omit the details here.
\end{proof}

\subsection{The case of semi-simple}

 As is known, an eigenvalue  is called semi-simple if its algebraic multiplicity is equal to geometric multiplicity. When all eigenvalues of $P(\lambda)$ are semi-simple, the matrices $\Gamma_R$ for $(T, \epsilon)$-palindromic and $\Gamma$ for $(H, \epsilon)$-palindromic still have the forms as in (\ref{gs-26}) and (\ref{gs-38}), respectively, but the special structures as in $\Theta_{(a, b, j)}$ will no longer show up. In fact, the structures of $\Gamma_R$ in (\ref{gs-26}) and $\Gamma $ in (\ref{gs-38}) can be further simplified if all the eigenvalues of $P(\lambda)$ are semi-simple. The following lemma can be obtained directly from the SVD \cite{Roger-book-2012} of a matrix.

 \begin{lemma}\label{lem-3} 
 Let $W=\begin{bmatrix}
 0 & W_1\\
 -\epsilon W_1^T & 0 \\
 \end{bmatrix}$ be a nonsingular matrix, where $W_1=\mbox{diag}(W_{11},W_{22})$, $W_{11}, W_{22}\in\mathbb{C}^{n\times n}$. There exist unitary matrices $U_1, U_2, V_1, V_2\in\mathbb{C}^{n\times n}$ such 
 \begin{equation}\label{gs-30}
 PWP^T=\begin{bmatrix}
 0 & D\\
 -\epsilon D & 0\\
 \end{bmatrix},\ \ \ D=\mbox{diag}(\sigma_1,\ldots, \sigma_n),\ \ \sigma_j>0,
 \end{equation}
 where $P=\mbox{diag}(U_{1}^H,\ U_{2}^H, V_{1}^T, V_2^T)$.
 \end{lemma}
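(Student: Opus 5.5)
Take the SVDs $W_{11}=U_1\Sigma_1V_1^H$ and $W_{22}=U_2\Sigma_2V_2^H$ and conjugate $W$ block by block with the resulting unitary factors. Since $W$ is nonsingular, $W_1$ is nonsingular, because $\det W=\pm\det(W_1)\det(W_1^T)$. Hence $W_{11}$ and $W_{22}$ are nonsingular. Their SVDs therefore have positive diagonal factors $\Sigma_1,\Sigma_2\in\mathbb{R}^{n\times n}$, with unitary $U_1,U_2,V_1,V_2\in\mathbb{C}^{n\times n}$.

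Next I compute $PWP^T$ with $P=\mbox{diag}(U_1^H,U_2^H,V_1^T,V_2^T)$. Write $P=\mbox{diag}(P_1,P_2)$, where $P_1=\mbox{diag}(U_1^H,U_2^H)$ and $P_2=\mbox{diag}(V_1^T,V_2^T)$. Then
\begin{equation*}
PWP^T=\begin{bmatrix} 0 & P_1W_1P_2^T\\ -\epsilon P_2W_1^TP_1^T & 0\end{bmatrix}.
\end{equation*}
The upper right block is
\begin{equation*}
P_1W_1P_2^T=\mbox{diag}(U_1^HW_{11}\bar V_1,\ U_2^HW_{22}\bar V_2).
\end{equation*}
This is where the choice of $P$ must be reconciled with the SVD. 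Since $P_2^T=\mbox{diag}(V_1,V_2)$, the block actually equals $\mbox{diag}(U_1^HW_{11}V_1,U_2^HW_{22}V_2)$. By the SVD this is $\mbox{diag}(\Sigma_1,\Sigma_2)=:D$.

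The lower left block is $-\epsilon P_2W_1^TP_1^T=-\epsilon(P_1W_1P_2^T)^T=-\epsilon D^T=-\epsilon D$, because $D$ is real diagonal. The diagonal entries of $D$ are the singular values $\sigma_j>0$, so (\ref{gs-30}) holds.

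The only real point of care is the bookkeeping of transposes versus conjugate transposes, that is, checking that $(V_1^T)^T=V_1$. That check is what makes $P$ with the $V_i^T$ entries produce exactly $U_i^HW_{ii}V_i$. I also note that the statement implicitly takes $D\in\mathbb{R}^{2n\times 2n}$, built from the singular values of both blocks; I would read the index range of $D$ in that way. If a complex symmetric variant were intended, the same argument would go through with a Takagi factorization, but the plain SVD suffices here.
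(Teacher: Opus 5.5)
Your proof is correct and is the argument the paper intends. The paper gives no details beyond saying the lemma follows directly from the SVD; your blockwise SVDs $W_{ii}=U_i\Sigma_iV_i^H$ supply exactly that, and reading $D=\mbox{diag}(\Sigma_1,\Sigma_2)$ as $2n\times 2n$ matches how the lemma is used in Theorem~\ref{thm-s-1}. The one blemish is the intermediate display with $\bar V_1,\bar V_2$, which is wrong as written and should be dropped, since $P_2^T=\mbox{diag}(V_1,V_2)$ gives $U_i^HW_{ii}V_i=\Sigma_i$ directly.
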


\begin{theorem}\label{thm-s-1}
Let $(X,\ J)$ defined by (\ref{gs-14-1}) be a standard pair of $(T, \epsilon)$-palindromic $P(\lambda)$ with $\mathbb{K}=\mathbb{R}$. Suppose that all eigenvalues of $P(\lambda)$ are semi-simple. Then there exists an unitary matrix $M\in\mathbb{C}^{2n\times 2n}$  of form (\ref{xg-m}) and a nonsingular matrix $T\in\mathbb{C}^{2n\times 2n}$ such that $(Y_R,\ J_R)$ is a real-valued standard pair of $P(\lambda)$, where $Y_R=XTM^H$, $J_R=MJM^{H}$ and the corresponding matrix $\tilde{\Gamma}_R\in\mathcal{S}_{(J_R,T,\epsilon)}$ is of form
\begin{equation}\label{gs-31}
\tilde{\Gamma}_R=M\mbox{diag}\left(\begin{bmatrix}
0 & I_{2n_1}\\
-\epsilon I_{2n_1} & 0\\
\end{bmatrix},\cdots,\begin{bmatrix}
0 & I_{2n_{r_1}}\\
-\epsilon I_{2n_{r_1}} & 0\\
\end{bmatrix}, \begin{bmatrix}
0 & I_{n_{r_1+1}}\\
-\epsilon I_{n_{r_1+1}} & 0\\
\end{bmatrix},\cdots,\begin{bmatrix}
0 & I_{n_{r-2}}\\
-\epsilon I_{n_{r-2}} & 0\\
\end{bmatrix}, \mathfrak{I}_{r-1}, \mathfrak{I}_r\right)M^T,\\
\end{equation}
where $4(n_1+\cdots+n_{r_1})+2(n_{r_1+1}+\cdots+n_{r-2})+n_{r-1}+n_r=2n$ and for $k=n_{r-1}, n_r$,
\begin{align}
&\mathfrak{I}_k=\begin{bmatrix}
0 & I_{k/2}\\
-\epsilon I_{k/2} &0\\
\end{bmatrix},\  \mbox{if}\ \epsilon=1\ \mbox{or}\ \epsilon=-1\ \mbox{and}\ n \ \mbox{is\ even},\label{xg-6-1}\\
&\mathfrak{I}_k=I_{k},\ \mbox{if}\ \epsilon=-1\ \mbox{and}\ n\ \mbox{is\ odd}.\label{xg-6-2}
\end{align}

\end{theorem}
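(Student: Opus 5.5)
The approach is to begin from Theorem \ref{thm-gamma} and then normalize each diagonal block of $\Phi=\mbox{diag}(\Phi_1,\ldots,\Phi_r)$ by a block-diagonal congruence that commutes with $J$. Because every eigenvalue is semi-simple, all $N_j=0$, so each $J_j$ is a diagonal matrix of scalar blocks. The defining conditions of $\Theta_{(a,b,j)}$ then become vacuous, and $V_{j1},V_{j4},F_j,E_j$ are arbitrary blocks, nonsingular because $\Gamma$ is. Nonsingularity of $\Phi_{r-1},\Phi_r$ combined with $\Phi^T=-\epsilon\Phi$ is the only remaining constraint. The key observation is that if $T=\mbox{diag}(T_1,\ldots,T_r)$ with each $T_j$ commuting with $J_j$, then $(XT,J)$ is again a standard pair: $X_L$ is multiplied on the right by $T$, and (\ref{gs-3}) is preserved. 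Condition (\ref{gs-4}) then holds with $T^{-1}\Phi T^{-T}$ in place of $\Phi$, and this matrix still lies in $\mathcal{S}_{(J,T,\epsilon)}$. The task therefore reduces to choosing $T_j$, block diagonal with respect to the eigenvalue blocks of $J_j$, that bring each $\Phi_j$ to the canonical form.

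For $j\le r_1$ we have $\Phi_j=\bigl[\begin{smallmatrix}0&V_j\\-\epsilon V_j^T&0\end{smallmatrix}\bigr]$ with $V_j=\mbox{diag}(V_{j1},V_{j4})$. We take $T_j=\mbox{diag}(V_{j1},V_{j4},I,I)$, which commutes with $J_j$ since each piece sits in a scalar block. Then $T_j^{-1}\Phi_jT_j^{-T}=\bigl[\begin{smallmatrix}0&I_{2n_j}\\-\epsilon I_{2n_j}&0\end{smallmatrix}\bigr]$. Lemma \ref{lem-3} could be used instead, but the direct choice is simpler and gives the identity without first passing through the SVD scaling $D$. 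The same move handles $j=r_1+1,\ldots,r-2$ with $T_j=\mbox{diag}(F_j,I)$ or $\mbox{diag}(E_j,I)$, giving $\bigl[\begin{smallmatrix}0&I\\-\epsilon I&0\end{smallmatrix}\bigr]$.

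The blocks for the eigenvalues $\pm1$ are where the parity statements enter, and I expect this to be the main obstacle. Here $J_{r-1}=I$ and $J_r=-I$, so any nonsingular $T_{r-1},T_r$ commutes with them. The requirement is a congruence normal form of a nonsingular $\Phi_k$ with $\Phi_k^T=-\epsilon\Phi_k$. For $\epsilon=1$, $\Phi_k$ is skew, so $k$ is even and $\Phi_k$ is congruent to $\bigl[\begin{smallmatrix}0&I_{k/2}\\-I_{k/2}&0\end{smallmatrix}\bigr]$ (Darboux normal form); this is consistent with Lemma \ref{lem-6}(2). For $\epsilon=-1$, $\Phi_k$ is complex symmetric and nonsingular, hence congruent over $\mathbb{C}$ to $I_k$ via Takagi's factorization. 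When $k$ is even it is also congruent to $\bigl[\begin{smallmatrix}0&I_{k/2}\\ I_{k/2}&0\end{smallmatrix}\bigr]$, because all nonsingular complex symmetric matrices of the same size are congruent. This gives (\ref{xg-6-1}) and (\ref{xg-6-2}) after the parity bookkeeping, with Lemma \ref{lem-6}(1) supplying that the multiplicities are odd when $n$ is odd and $\epsilon=-1$. The delicate parts are this parity case analysis and the requirement that the transformed pair remain real. To get real-valuedness of $Y_R=XTM^H$, I would choose $T$ conjugation-compatible: the block acting on the $\bar\lambda_j$ columns should be the conjugate of the block acting on the $\lambda_j$ columns, ensured by $V_{j4}=\overline{V_{j1}}$, which itself follows from the reality of $\Gamma_R$. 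For the $\pm1$ blocks, which are real, the normal forms should be reached by real congruences. Here the real skew case is fine, but the real symmetric case only reaches $\mbox{diag}(\pm1)$ by Sylvester's law. I would handle this either by allowing the complex $T$ the statement permits, or by accepting a signature matrix there.

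Finally, we assemble $T=\mbox{diag}(T_1,\ldots,T_r)$ and set $\tilde\Gamma_R=M(T^{-1}\Phi T^{-T})M^T$. Theorem \ref{main-thm}, applied exactly as in the proof of Theorem \ref{thm-gamma}, then yields (\ref{gs-31}) for the standard pair $(Y_R,J_R)$.
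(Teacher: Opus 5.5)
The reduction itself is sound. If $T=\mathrm{diag}(T_1,\ldots,T_r)$ commutes with $J$, then $(XT,J)$ is again a standard pair with parameter matrix $T^{-1}\Phi T^{-T}$. Your choices of $T_j$ for $j\le r_1$ (real-preserving because $V_{j4}=\overline{V_{j1}}$) and for real $\lambda_j\neq\pm1$ are simpler than the paper's SVD-plus-scaling through Lemma \ref{lem-3}, and unlike the paper you actually check reality. There are, however, two places where the argument does not reach the statement.

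\textbf{Unimodular non-real blocks.} The step that fails is ``the same move handles $j=r_1+1,\ldots,r_2$''. For non-real unimodular $\lambda_j$ the blocks of $J_j$ are $\lambda_jI$ and $\bar\lambda_jI=\lambda_j^{-1}I$. So $T_j=\mathrm{diag}(F_j,I)$ rescales the $\lambda_j$-columns of $X$ but not their conjugate partners, and $XTM^H$ is real only if $F_j=I$.
\begin{itemize}
\item Any real-preserving choice must be $T_j=\mathrm{diag}(G,\bar G)$. It acts by the $*$-congruence $F_j\mapsto G^{-1}F_jG^{-H}$.
\item The conjugation argument that gave you $V_{j4}=\overline{V_{j1}}$ gives here $\overline{F_j}=-\epsilon F_j^T$, i.e.\ $F_j^H=-\epsilon F_j$. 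A $*$-congruence preserves this (skew-)Hermitian structure and its inertia.
\item For $\epsilon=1$ the target $F_j=I$ is therefore unreachable. With that block, $M\,\mathrm{diag}(\cdots)M^T$ is not even real, whereas by (\ref{gs-g}) the parameter matrix of a real pair is real.
\item For $\epsilon=-1$ the target is reachable only when $F_j$ is positive definite.
\end{itemize}
Concretely, take $P(\lambda)=\lambda^2+1$ ($A=1$, $Q=0$, $\epsilon=1$) with $X=[1,\ 1]$ and $J=\mathrm{diag}({\rm i},-{\rm i})$. Then $\Phi=\bigl[\begin{smallmatrix}0&-{\rm i}/2\\ {\rm i}/2&0\end{smallmatrix}\bigr]$. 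The only admissible rescalings are $XT=[s,\ \bar s]$, and they give $f=-{\rm i}/(2|s|^2)\neq1$.

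\textbf{The $\pm1$ blocks for $\epsilon=-1$.} Your hesitation here is justified, and the issue is not parity bookkeeping. A real $Y_R$ forces real $T_{r-1},T_r$, so Sylvester's law fixes the inertia of the real symmetric $\Phi_{r-1},\Phi_r$. For example, $P(\lambda)=\lambda^2-1$ ($n=1$, $\epsilon=-1$) gives $\Phi=\mathrm{diag}(1/2,-1/2)$. No real nonzero $Y_R=[y_1,\ y_2]$ satisfies $Y_R\,\mathrm{diag}(1,1)\,Y_R^T=0$, so (\ref{xg-6-2}) fails. Neither of your fallbacks rescues the statement as written: a complex $T$ destroys reality, and a signature matrix changes the normal form.

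These gaps cannot be closed, because the normal form needs sign (inertia) matrices in the unimodular and $\pm1$ blocks. The paper's proof has the same hole. After (\ref{gs-32}) it only asserts that the columns of $\hat X$ stay closed under conjugation, which fails for exactly these blocks, and it dismisses $\epsilon=-1$ as similar. What your argument does prove is the theorem for $\epsilon=1$ when $P(\lambda)$ has no non-real unimodular eigenvalues.
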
 
\begin{proof}
We only give the proof of the case $\epsilon=1$, and the case of $\epsilon=-1$ can be proved similarly. Similar to the proof of Theorem \ref{thm-gamma}, we can obtain that  $(X_R,\ J_R)$ is a real-valued standard pair of $P(\lambda)$, where  $X_R=XM^H$ and $J_R=MJM^{H}$. And then $P(\lambda)$ has a real-valued  spectral decomposition given by (\ref{gs-27}) in which $\Gamma_R\in\mathcal{S}_{(J_R,T,\epsilon)}$ has form as given by (\ref{gs-26}). Now, we discuss the structure of $\Phi_j$ according to the Jordan canonical form $J_j$ for distinct eigenvalue $\lambda_j$.

(1). $\lambda_j\in\mathbb{C}/\mathbb{R}$ with $|\lambda_j|\neq 1$, and $\Phi_j$ is given by (\ref{gs-g-1}), for $j=1,\ldots,r_1$. By Lemma \ref{lem-3}, there exist unitary matrices $Q_{j1},Q_{j2},Q_{j3},Q_{j4}\in\mathbb{C}^{n_j\times n_j}$  such that 
\begin{equation*}Q_j\begin{bmatrix}
0 & V_j\\
-\epsilon V_j^T & 0\\
\end{bmatrix}Q_j^T=\begin{bmatrix}
0 & D_j\\
-\epsilon D_j & 0\\
\end{bmatrix},\end{equation*}
where $Q_j=\mbox{diag}(Q_{j1}^H,Q_{j2}^H,Q_{j3}^T,Q_{j4}^T)$, $D_j=\mbox{diag}(\sigma_1^{(j)},\ldots,\sigma_{2n_j}^{(j)}),$ $\sigma_k^{(j)}>0$, $k=1,\ldots, 2n_j$. Let $D_{jj}=\mbox{diag}(D_j, D_j)^{1/2}$. Then
\begin{equation}\label{xg-3}
D_{jj}^{-1}Q_j\Phi_jQ_j^TD_{jj}^{-1}=\begin{bmatrix}
0 & I_{2n_j}\\
-\epsilon I_{2n_j} & 0\\
\end{bmatrix}, \ j=1,\ldots, r_1.
\end{equation}
Let $\hat{X}_j:=X_jQ^{-1}D_{jj}$. Since $N_j=0$, it follows that \begin{equation}\label{xg-1}
\hat{X}_jJ_j=X_jQ^{-1}D_{jj}J_j=X_j Q^{-1}J_jD_{jj}=X_jJ_jQ^{-1}D_{jj},\ j=1,\ldots,r_1.
\end{equation}

(2). Similarly, for Cases of $\lambda_j\in\mathbb{C}/\mathbb{R}$ with $|\lambda_j|=1$, $j=r_1+1,\ldots,r_2$ and $\lambda_j\in\mathbb{R}$ with $\lambda_j^2\neq 1$, $j=r_2+1,\ldots,r-2$, we can prove that there exists an unitary matrix $Q_j$ and a nonsingular diagonal matrix $D_{jj}$ such that 
\begin{equation}\label{xg-4}
D_{jj}^{-1}Q_j\Phi_jQ_j^TD_{jj}^{-1}=\begin{bmatrix}
0 & I_{n_j}\\
-\epsilon I_{n_j} & 0\\
\end{bmatrix}, \ j=r_1+1,\ldots,r-2.
\end{equation}
Let $\hat{X}_j:=X_jQ^{-1}D_{jj}$. It is easy to see that
 \begin{equation}\label{xg-2}
\hat{X}_jJ_j=X_jQ^{-1}D_{jj}J_j=X_j Q^{-1}J_jD_{jj}=X_jJ_jQ^{-1}D_{jj}, \ j=r_1+1,\ldots,r-2.
\end{equation}

(3) $\lambda_{{r-1}}=1$, $\lambda_{r}=-1$ and $\Phi_{r-1}^T=-\Phi_{r-1}$, $\Phi_r^T=-\Phi_r$.  By Lemma \ref{lem-6}, we can obtain that the numbers $n_{r-1}$ and $n_r$ must be even. There exists an unitary matrix $Q_{j}\in\mathbb{R}^{n_j\times n_j}$ such that $\Phi_j$ in (\ref{gs-26}) satisfies 
\begin{equation*}Q_{j}\Phi_{j}Q_{j}^T=\begin{bmatrix}
0 & D_{j}\\
-D_{j} & 0\\
\end{bmatrix},\ j=r-1,r,\end{equation*}
where $D_{j}=\mbox{diag}(\eta^{(j)}_1,\ldots,\eta^{(j)}_{(n_j/2)})$, $\eta_{k}>0$, $k=1,\ldots,\frac{n_j}{2}$. Let $D_{jj}=\mbox{diag}(D_{j}, D_{j})^{1/2}$. Then
\begin{equation}\label{xg-4-1}
D_{jj}^{-1}Q_j\Phi_jQ_j^TD_{jj}^{-1}=\begin{bmatrix}
0 & I_{(n_j/2)}\\
-I_{(n_j/2)} & 0\\
\end{bmatrix}, \ j=r-1,r.
\end{equation}
Moreover, 
 \begin{equation}\label{xg-2}
\hat{X}_j:=X_jQ^{-1}D_{jj},\ \ \hat{X}_jJ_j=X_jQ^{-1}D_{jj}J_j=X_j Q^{-1}J_jD_{jj}=X_jJ_jQ^{-1}D_{jj}, \ j=r-1,r.
\end{equation}

Let 
\begin{equation*}\hat{X}=[\hat{X}_1,\hat{X}_2,\ldots,\hat{X}_r],\ \ P=\mbox{diag}(Q_1,Q_2,\ldots,Q_{r}), \ \ D=\mbox{diag}(D_{11},D_{22},\ldots,D_{rr}).\end{equation*}
 From (\ref{xg-1}) and (\ref{xg-2}), we can obtain that 
\begin{equation}\label{gs-32}
\hat{X}=XP^{-1}D,\ \ \ \hat{X}J=XP^{-1}DJ=XP^{-1}JD=XJP^{-1}D,
\end{equation}
which implies that $(\hat{X},\ J)$ is a standard pair of $P(\lambda)$. Since all the non-real eigenvalues of $J$ are closed under the complex conjugate, it follows that the columns of $\hat{X}$ are still closed under the complex conjugate. From Lemma \ref{remark-1}, we can obtain that $(Y_R,\ J_R)$  is a real-valued standard pair of $P(\lambda)$, where $Y_R=\hat{X}M^{H}$ and $J_R=MJM^{H}$.

Since $(X_R,\ J_R)$ is a real standard pair of $P(\lambda)$, we can see from Theorem \ref{main-thm} that 
\begin{equation*}\Gamma_R=\left([X_R^T,\ J^{-T}_RX_R^T]\begin{bmatrix}
Q & A\\
A & 0\\
\end{bmatrix}\begin{bmatrix}
X_R\\
X_RJ_R\\
\end{bmatrix}\right)^{-1}.\end{equation*}
It follows from the proof of Theorem \ref{thm-gamma} that 
\begin{equation}\label{gs-33}
\Phi =M^{H}\Gamma_R(M^H)^T=\left([X^T,\ J^{-T}X^T]\begin{bmatrix}
Q & A\\
A & 0\\
\end{bmatrix}\begin{bmatrix}
X\\
XJ\\
\end{bmatrix}\right)^{-1}.
\end{equation}
Then, we can obtain from (\ref{xg-3}), (\ref{xg-4}), (\ref{xg-4-1}), (\ref{gs-32}) and (\ref{gs-33}) that 
\begin{equation}\label{xg-5}
\begin{array}{ll}
\tilde{\Gamma}_R &:= MD^{-1}P\Phi P^TD^{-1}M^T\\
&\ \ =M\left([\hat{X}^T,\ J^{-T}\hat{X}^T]\begin{bmatrix}
Q & A\\
A & 0\\
\end{bmatrix}\begin{bmatrix}
\hat{X}\\
\hat{X}J\\
\end{bmatrix}\right)^{-1}M^T\\
&\ \ =\left([Y_R^T,\ J_R^{-T}Y_R^T]\begin{bmatrix}
Q & A\\
A & 0\\
\end{bmatrix}\begin{bmatrix}
Y_R\\
Y_RJ_R\\
\end{bmatrix}\right)^{-1}\\
\end{array}
\end{equation}
 has the form as in (\ref{gs-31}), which implies that $\tilde{\Gamma}_R$ is indeed the matrix corresponding to $(Y_R,\ J_R)$.
\end{proof}
 
 \begin{theorem}\label{thm-s-2}
 Let $(X,\ J)$ defined by (\ref{gs-14-1}) be a standard pair of $(H, \epsilon)$-palindromic $P(\lambda)$ with $\mathbb{K}=\mathbb{C}$. Suppose that all the eigenvalues of $P(\lambda)$ are semi-simple, then there exist a standard pair $(\tilde{X},\ J)$ of $P(\lambda)$ such that the corresponding matrix $\Gamma\in\mathcal{S}_{(J,H,\epsilon)}$ can be expressed as
 \begin{equation}\label{gs-35}
 \Gamma=\mbox{diag}\left(\begin{bmatrix}
 0 & I_{n_1}\\
 -\epsilon I_{n_1} & 0\\
 \end{bmatrix},\cdots,\begin{bmatrix}
 0 & I_{n_{l}}\\
 -\epsilon I_{n_{l}} & 0\\
 \end{bmatrix}, \mathfrak{E}_{l+1}, \ldots, \mathfrak{E}_{r}\right),
 \end{equation}
 where $\mathfrak{E}_j=\mbox{diag}(\pm {\rm i}, \ldots, \pm {\rm i})$ is of order $n_j\times n_j$ when $\epsilon=1$ and  $\mathfrak{E}_j=\mbox{diag}(\pm 1, \ldots, \pm 1)$ is of order $n_j\times n_j$ when $\epsilon=-1$, $j=l+1,\ldots, r.$ 
 \end{theorem}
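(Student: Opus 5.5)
Start from Theorem \ref{thm-gamma-2}: since $(X,\ J)$ is a standard pair of the $(H,\epsilon)$-palindromic $P(\lambda)$, there is a nonsingular $\Gamma\in\mathcal{S}_{(J,H,\epsilon)}$ of the block form (\ref{gs-38}) with (\ref{gs-36}). In the semi-simple case every $N_j=0$, so the constraints $S_j\in\Theta_{(\cdot,\cdot,j)}$ are vacuous. The only remaining structure is then:
\begin{itemize}
\item each $S_j$ with $j\le l$ is an arbitrary nonsingular $n_j\times n_j$ matrix;
\item each $S_j$ with $j>l$ is nonsingular and satisfies $S_j^H=-\epsilon S_j$.
\end{itemize}
Nonsingularity of each block follows from that of $\Gamma$. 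The plan is to find a block diagonal nonsingular $T=\mbox{diag}(T_1,\ldots,T_r)$ commuting with $J$, and to show two things. First, $(\tilde X,\ J)$ with $\tilde X=XT$ is again a standard pair. Second, its associated matrix, which by the formula (\ref{gs-g}) equals $T^{-1}\Gamma T^{-H}$, has the normalized form (\ref{gs-35}).

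\emph{Verifying the new pair.} Because $T$ commutes with $J$, we have $\tilde X_L=X_LT$, which is nonsingular, and (\ref{gs-3}) holds for $\tilde X$ after multiplying on the right by $T$. Hence $(\tilde X,\ J)$ is a standard pair by Lemma \ref{lem-1}. Also $\tilde Y_L=Y_LT$ since $TJ^{-1}=J^{-1}T$. So by (\ref{gs-g}),
$$\tilde\Gamma=(T^H Y_L^H\,\mathcal{M}\,X_LT)^{-1}=T^{-1}\Gamma T^{-H},$$
where $\mathcal{M}=\bigl[\begin{smallmatrix}Q&A\\A&0\end{smallmatrix}\bigr]$. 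This is the same argument as (\ref{xg-5}) in the proof of Theorem \ref{thm-s-1}.

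\emph{Blocks with $|\lambda_j|\ne1$, $j\le l$.} Here $J_j=\mbox{diag}(\lambda_jI_{n_j},\ \bar\lambda_j^{-1}I_{n_j})$, so any $T_j=\mbox{diag}(T_{j1},T_{j2})$ commutes with $J_j$. Choosing $T_{j1}=S_j$ and $T_{j2}=I$ gives
$$T_j^{-1}\begin{bmatrix}0&S_j\\-\epsilon S_j^H&0\end{bmatrix}T_j^{-H}=\begin{bmatrix}0&I\\-\epsilon I&0\end{bmatrix}.$$
Alternatively, one can take an SVD $S_j=U\Sigma V^H$ and set $T_{j1}=U\Sigma^{1/2}$, $T_{j2}=V\Sigma^{1/2}$, in the spirit of Lemma \ref{lem-3}.

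\emph{Blocks with $|\lambda_j|=1$, $j>l$.} Here $J_j=\lambda_jI_{n_j}$, so any nonsingular $T_j$ commutes with it.
\begin{itemize}
\item If $\epsilon=-1$, then $S_j$ is Hermitian and nonsingular. By Sylvester's law of inertia there is a nonsingular $T_j$ with $T_j^{-1}S_jT_j^{-H}=\mbox{diag}(\pm1)$: take a unitary eigendecomposition $S_j=U\Lambda U^H$ and set $T_j=U|\Lambda|^{1/2}$.
\item If $\epsilon=1$, then $S_j$ is skew-Hermitian, so ${\rm i}S_j$ is Hermitian. Applying the same argument to ${\rm i}S_j$ yields $T_j^{-1}S_jT_j^{-H}=\mbox{diag}(\pm{\rm i})$.
\end{itemize}

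The main subtlety is not any single computation. Rather, it is justifying that $\tilde\Gamma$ is genuinely the matrix associated with $(\tilde X,\ J)$, and that the inertia signs cannot be removed. The signs are congruence invariants, so they must remain as the $\pm$ entries in $\mathfrak{E}_j$. The first point is handled by the identity $\tilde X_L=X_LT$ together with the uniqueness of $\Gamma$ given by (\ref{gs-g}).
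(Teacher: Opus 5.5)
Your proposal is correct and is essentially the paper's proof. Both start from the block form of Theorem~\ref{thm-gamma-2}, replace $X$ by $XT$ for a block-diagonal $T$ commuting with $J$ (the paper's $T=PD$), normalize the $|\lambda_j|\neq 1$ blocks by an SVD and the $|\lambda_j|=1$ blocks by a unitary diagonalization with $|\Sigma_j|^{1/2}$ scaling, and identify $T^{-1}\Gamma T^{-H}$ as the new matrix via (\ref{gs-g}); your primary choice $T_j=\mbox{diag}(S_j,I)$ is just a simpler alternative to the SVD, and your explicit identities $\tilde X_L=X_LT$, $\tilde Y_L=Y_LT$ make rigorous the step the paper only labels ``similar to (\ref{xg-5})'' (while also avoiding its inconsistent block definition $\tilde X_j=X_jD_{jj}Q_j$, which should read $X_jQ_jD_{jj}$ to match $\tilde X=XPD$).
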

 
 \begin{proof}
 Since $(X,\ J)$ is a standard pair of $P(\lambda)$, it follows from Theorem \ref{thm-gamma-2} that $P(\lambda)$ has a spectral decomposition as in (\ref{gs-36}), in which the matrix $\Gamma\in\mathcal{S}_{(J,H,\epsilon)}$ has the form as in (\ref{gs-38}).

(1). $\lambda_j\in\mathbb{C}$ with $|\lambda_j|\neq 1$, $j=1,\ldots,l$. Let the SVD of $S_j$ be $S_j=Q_{j1}D_jQ_{j2}^H$, where $Q_{j1}$ and $Q_{j2}$ are unitary matrices and $D_j=\mbox{diag}(\sigma_1^{(j)}, \ldots, \sigma_{n_j}^{(j)})$, $\sigma_k^{(j)}>0$, $k=1,\ldots, n_j$. Define $D_{jj}=\mbox{diag}(D_j, D_j)^{1/2}$. Let $Q_j:=\mbox{diag}(Q_{j1},Q_{j2})$. Then, we have 
\begin{equation*}\tilde{S}_j:=D_{jj}^{-1}Q_j^H\begin{bmatrix}
0 & S_1\\
-\epsilon S_1^H & 0\\
\end{bmatrix}Q_jD_{jj}^{-1}=\begin{bmatrix}
0 & I_{n_j}\\
-\epsilon I_{n_j} & 0\\
\end{bmatrix}.\end{equation*}
Let $\tilde{X}_j:=X_jD_{jj}Q_j$. Then
$\tilde{X}_jJ_j=X_jD_{jj}Q_jJ_j=X_jD_{jj}J_jQ_j=X_jJ_jD_{jj}Q_j,\ j=1, \ldots, l,$
 since $N_j=0$ by assumption. 
 
(2). $\lambda_j\in\mathbb{C}$ with $|\lambda_j|=1$, $j=l+1,\ldots,r$. Clearly, there exists an unitary matrix $Q_j\in\mathbb{C}^{n_j\times n_j}$ such that $S_j=Q_j\Sigma_jQ_j^H$, where $\Sigma_j=\mbox{diag}(\delta_1^{(j)}, \ldots, \delta_{n_j}^{(j)})$ is nonsingular with $\delta_k^{(j)}\in{\rm i\mathbb{R}}$ for $H$-palindromic and $\delta_k^{(j)}\in\mathbb{R}$ for $H$-anti-palindromic, 
$k=1,\ldots, n_j$. Define $D_{jj}=|D_j|^{1/2}$. Then, we have 
$\tilde{S}_j:=D_{jj}^{-1}Q_j^HS_jQ_jD_{jj}^{-1}=E_j,$ where $E_j$ is defined in (\ref{gs-35}). Let $\tilde{X}_j:=X_jD_{jj}Q_j$, we have
$\tilde{X}_jJ_j=X_jD_{jj}Q_jJ_j=X_jD_jJ_jQ_j=X_jJ_jD_{jj}Q_j,\ j=l+1, \ldots, r.$

 Now, let 
 \begin{equation*}\tilde{X}=[\tilde{X}_1,\tilde{X}_2,\ldots, \tilde{X}_r],\ P=\mbox{diag}(Q_{1}, Q_2, \ldots, Q_r),\ D=\mbox{diag}(D_{11},D_{22},\ldots,D_{rr}).\end{equation*}
Note that $P$ is unitary and  
\begin{equation*}\tilde{\Gamma}:= D^{-1}P^H\Gamma PD^{-1}=\mbox{diag}(\tilde{S}_1,\ldots,\tilde{S}_r)\end{equation*}
 has the form as in (\ref{gs-35}). Moreover, we have $\tilde{X}=XPD$ and  $\tilde{X}J=XPDJ=XPJD=XJPD.$
Similar to the proof of (\ref{xg-5}), we can prove that $\tilde{\Gamma}$ is indeed the matrix corresponding to $(\tilde{X},\ J)$. 
 \end{proof}

\section{Solvability of PMUP}

Based on the spectral decomposition of the $(\star,\epsilon)$-structured palindromic $P(\lambda)$ and structures of $\mathcal{S}_{(\star,\epsilon)}$ provided in Section \ref{sec2} and \ref{sec3}, respectively, we show that the PMUP is solvable and  characterize the parameter solutions of PMUP. 
Partition the matrix pair $(X,\ J)\in\mathbb{C}^{n\times 2n}\times\in\mathbb{C}^{2n\times 2n}$ into
\begin{equation}\label{gg-1}
J=\mbox{diag}(\Lambda_1,\ \Lambda_2),\ \ X=[Y_1,\ Y_2],
\end{equation}  
where $(Y_1,\ \Lambda_1)\in\mathbb{C}^{n\times p}\times\in\mathbb{C}^{p\times p}$ is eigenpair that needs to be repaced, and $(Y_2,\ \Lambda_2)$ is eigenpair which needs to be kept unchanged. Let 
$
\tilde{Y}_1\in\mathbb{K}^{n\times p},\ \ \tilde{\Lambda}_1\in\mathbb{C}^{p\times p},
$
 where $\sigma(\tilde{\Lambda}_1)=\{\tilde{\lambda}_j\}_{j=1}^p$, and the $k$-column of $\tilde{Y}_1$ is  right eigenvector corresponding to the prescribed eigenvalue $\tilde{\lambda}_k$.  Similar to the proof of the Theorem 3.1 in \cite{Mao-laa}, we can get the following orthogonal relationship. 
\begin{lemma}\label{lem-4}
Suppose that $(X,\ J)$ is a standard pair of $P(\lambda)$ with partitions as in (\ref{gg-1}). If $\sigma(\Lambda_1)\cap\sigma(\Lambda_2^{-\star})=\emptyset$, then
\begin{equation}\label{gg-7}
[Y_2^{\star},\ \Lambda_2^{-\star}Y_2^{\star}]\begin{bmatrix}
Q & A\\
A & 0\\
\end{bmatrix}\begin{bmatrix}
Y_1\\
Y_1\Lambda_1\\
\end{bmatrix}=0.
\end{equation}
\end{lemma}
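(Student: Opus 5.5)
We prove (\ref{gg-7}) by showing that the $p$-column block $Z:=[Y_2^{\star},\ \Lambda_2^{-\star}Y_2^{\star}]\left[\begin{smallmatrix} Q & A\\ A & 0\end{smallmatrix}\right][Y_1^{\star},\ \Lambda_1^{\star}Y_1^{\star}]^{\star}$ satisfies a homogeneous Sylvester equation whose coefficient spectra are disjoint. Expanding the block product gives
\begin{equation*}
Z=Y_2^{\star}QY_1+\Lambda_2^{-\star}Y_2^{\star}AY_1+Y_2^{\star}AY_1\Lambda_1 .
\end{equation*}
This is exactly the $(2,1)$ off-diagonal block of $\Gamma^{-1}$ in (\ref{gs-7}) when $X=[Y_1,Y_2]$ and $J=\mbox{diag}(\Lambda_1,\Lambda_2)$ are partitioned as in (\ref{gg-1}).

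The key steps mirror the necessity part of Theorem \ref{main-thm}, now applied blockwise.

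\emph{Step 1.} Since $(X,J)$ is a standard pair, Lemma \ref{lem-1} and the block-diagonal form of $J$ give
\begin{equation*}
AY_k\Lambda_k^2+QY_k\Lambda_k+\epsilon A^{\star}Y_k=0,\qquad k=1,2 .
\end{equation*}

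\emph{Step 2.} Multiply the $k=1$ equation on the left by $Y_2^{\star}$. This gives $Y_2^{\star}QY_1+Y_2^{\star}AY_1\Lambda_1=-\epsilon Y_2^{\star}A^{\star}Y_1\Lambda_1^{-1}$, and hence
\begin{equation*}
Z=\Lambda_2^{-\star}Y_2^{\star}AY_1-\epsilon Y_2^{\star}A^{\star}Y_1\Lambda_1^{-1}.
\end{equation*}

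\emph{Step 3.} Take the $\star$-transpose of the $k=2$ equation and multiply it on the right by $Y_1$. Using $Q^{\star}=\epsilon Q$, this yields
\begin{equation*}
Y_2^{\star}A^{\star}Y_1+\epsilon\Lambda_2^{-\star}Y_2^{\star}QY_1+\epsilon(\Lambda_2^{-\star})^2Y_2^{\star}AY_1=0,
\end{equation*}
which is the analogue of (\ref{gs-6-1}).

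\emph{Step 4.} Compute $Z\Lambda_1$ from the Step 2 form and $\Lambda_2^{-\star}Z$ from the defining form. For $Z\Lambda_1$ we get $-\epsilon Y_2^{\star}A^{\star}Y_1+\Lambda_2^{-\star}Y_2^{\star}AY_1\Lambda_1$. For $\Lambda_2^{-\star}Z$ we get $\Lambda_2^{-\star}Y_2^{\star}QY_1+(\Lambda_2^{-\star})^2Y_2^{\star}AY_1+\Lambda_2^{-\star}Y_2^{\star}AY_1\Lambda_1$. By Step 3, the first two terms of $\Lambda_2^{-\star}Z$ sum to $-\epsilon Y_2^{\star}A^{\star}Y_1$, so the two expressions coincide. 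This is precisely the computation that showed $\Gamma^{-1}J=J^{-\star}\Gamma^{-1}$ in Theorem \ref{main-thm}, restricted to one block. Therefore
\begin{equation*}
Z\Lambda_1-\Lambda_2^{-\star}Z=0 .
\end{equation*}

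\emph{Step 5.} Vectorizing, $\left(\Lambda_1^{T}\otimes I-I\otimes\Lambda_2^{-\star}\right)\mbox{\bf vec}(Z)=0$. The eigenvalues of this Kronecker matrix are the differences $\mu-\nu$ with $\mu\in\sigma(\Lambda_1)$ and $\nu\in\sigma(\Lambda_2^{-\star})$. The hypothesis $\sigma(\Lambda_1)\cap\sigma(\Lambda_2^{-\star})=\emptyset$ makes all these differences nonzero, so the matrix is nonsingular. Hence $Z=0$, which is (\ref{gg-7}).

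The main obstacle is Step 4: keeping the $\star$-transposes and the factors of $\epsilon$ straight in the Hermitian and transpose cases. In particular, $(\Lambda_2^{-1})^{\star}=\Lambda_2^{-\star}$ and $\epsilon^2=1$ must be used consistently. I would organize the computation so that it literally repeats the block computation of Theorem \ref{main-thm}, which has already been verified. The Sylvester-equation uniqueness argument in Step 5 is standard.
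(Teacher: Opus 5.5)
Your proof is correct. The blockwise equations, the $\star$-transposed identity in Step 3 (which uses $Q^{\star}=\epsilon Q$), and the resulting Sylvester equation $Z\Lambda_1=\Lambda_2^{-\star}Z$ all check out, and the disjointness hypothesis forces $Z=0$.

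The paper gives no argument of its own; it only points to the analogous orthogonality relation (Theorem 3.1) in the cited work of Mao et al. Your disjoint-spectra Sylvester argument supplies those details, and, as you note, Step 4 is just the $(2,1)$ block of the identity $\Gamma^{-1}J=J^{-\star}\Gamma^{-1}$ already established in the proof of Theorem \ref{main-thm}.
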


For the damped symmetric system, it has been shown in \cite{11_2007Updating} that a necessary solvable condition for the no spill-over MUP is that the matrices $Y_1$ and $\tilde{Y}_1$ should satisfy $\tilde{Y}_1=Y_1\Phi$ for some singular matrix $\Phi$.  Motivated by  \cite{11_2007Updating}, we give a sufficient solvable condition for the PMUP.

\begin{theorem}\label{thm-eep}
Suppose that $(X,\ J)$ is a standard pair of the $(\star,\epsilon)$-palindromic system $P(\lambda)$ with the partitions as in (\ref{gg-1}). If $\sigma(\Lambda_1)\cap\sigma(\Lambda_2^{-\star})=\emptyset$, then for any nonsingular matrix $\Phi\in\mathbb{K}^{p\times p}$ and $\tilde{\Gamma}_1\in\mathcal{S}_{(\tilde{\Lambda}_1,\star,\epsilon)}$ which satisfy  
\begin{equation}\label{gg-8}
\Phi\tilde{\Gamma}_1\Phi^{\star}=\Gamma_1,
\end{equation}
where
\begin{equation}\label{gg-9}
\Gamma_1=\left([Y_1^{\star},\ Y_1^{-\star}\Lambda_1^{\star}]\begin{bmatrix}
Q & A\\
A & 0\\
\end{bmatrix}\begin{bmatrix}
Y_1\\
Y_1\Lambda_1\\
\end{bmatrix}\right)^{-1},
\end{equation}
the matrices 
\begin{equation} \label{gg-10-1}
 \tilde{A}=\left(A^{-1}+(\tilde{Y}_1\tilde{\Lambda}_1\tilde{\Gamma}_1\tilde{Y}_1^{\star}-Y_1\Lambda_1\Gamma_1Y_1^{\star}) \right)^{-1},\ \  \tilde{Q}=\tilde{A}A^{-1}QA^{-1}\tilde{A}+\tilde{A}(Y_1\Lambda_1^2\Gamma_1Y_1^{\star}-\tilde{Y}_1\tilde{\Lambda}_1^2\tilde{\Gamma}_1\tilde{Y}_1^{\star})\tilde{A},
\end{equation}
form a solution of the PMUP.
\end{theorem}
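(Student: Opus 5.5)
The plan is to exhibit an explicit standard pair for the updated system and then invoke the sufficiency part of Theorem \ref{main-thm}. Define
\begin{equation*}
\tilde X:=[\tilde Y_1,\ Y_2],\qquad \tilde J:=\mbox{diag}(\tilde\Lambda_1,\ \Lambda_2),\qquad \tilde\Gamma:=\mbox{diag}(\tilde\Gamma_1,\ \Gamma_2),
\end{equation*}
where $\Gamma_2$ is the $(2,2)$ block of the matrix $\Gamma$ attached to $(X,J)$ by (\ref{gs-g}). If $(\tilde X,\tilde J)$ is a standard pair of a $(\star,\epsilon)$-palindromic polynomial with coefficients $\tilde A,\tilde Q$, then $(\tilde Y_1,\tilde\Lambda_1)$ has replaced $(Y_1,\Lambda_1)$ while $(Y_2,\Lambda_2)$ is kept, so there is no spill-over. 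It therefore suffices to check the hypotheses of Theorem \ref{main-thm} for $(\tilde X,\tilde J,\tilde\Gamma)$ and to show that (\ref{gs-5}) reproduces (\ref{gg-10-1}).

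\textbf{Step 1: block structure of $\Gamma$.} By Lemma \ref{lem-4}, the hypothesis $\sigma(\Lambda_1)\cap\sigma(\Lambda_2^{-\star})=\emptyset$ makes the $(2,1)$ block of $Y_L^{\star}\bigl[\begin{smallmatrix}Q&A\\A&0\end{smallmatrix}\bigr]X_L$ vanish. Taking $\star$ of this relation and using $Q^\star=\epsilon Q$ should also kill the $(1,2)$ block. Alternatively, $J\Gamma=\Gamma J^{-\star}$ together with the disjointness of the spectra forces both off-diagonal blocks to vanish, as in Lemma \ref{lem-2}. Hence
\begin{equation*}
\Gamma=\mbox{diag}(\Gamma_1,\Gamma_2),
\end{equation*}
with $\Gamma_1$ given by (\ref{gg-9}). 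Since $\Gamma\in\mathcal{S}_{(J,\star,\epsilon)}$, we get $\Gamma_2\in\mathcal{S}_{(\Lambda_2,\star,\epsilon)}$. Theorem \ref{main-thm} then yields three identities:
\begin{equation*}
Y_1\Gamma_1Y_1^\star+Y_2\Gamma_2Y_2^\star=0,
\end{equation*}
\begin{equation*}
A^{-1}=Y_1\Lambda_1\Gamma_1Y_1^\star+Y_2\Lambda_2\Gamma_2Y_2^\star,
\end{equation*}
\begin{equation*}
-A^{-1}QA^{-1}=Y_1\Lambda_1^2\Gamma_1Y_1^\star+Y_2\Lambda_2^2\Gamma_2Y_2^\star.
\end{equation*}

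\textbf{Step 2: structure and orthogonality of $\tilde\Gamma$.} Because $\tilde\Gamma_1\in\mathcal{S}_{(\tilde\Lambda_1,\star,\epsilon)}$ and $\Gamma_2\in\mathcal{S}_{(\Lambda_2,\star,\epsilon)}$, we have $\tilde\Gamma\in\mathcal{S}_{(\tilde J,\star,\epsilon)}$. It is nonsingular because $\Phi$ and $\Gamma_1$ are, so $\tilde\Gamma_1=\Phi^{-1}\Gamma_1\Phi^{-\star}$ is. The required orthogonality reads
\begin{equation*}
\tilde X\tilde\Gamma\tilde X^\star=\tilde Y_1\tilde\Gamma_1\tilde Y_1^\star-Y_1\Gamma_1Y_1^\star=0.
\end{equation*}
This is exactly where (\ref{gg-8}) enters. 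Writing $\Gamma_1=\Phi\tilde\Gamma_1\Phi^\star$, the condition becomes $\tilde Y_1\tilde\Gamma_1\tilde Y_1^\star=(Y_1\Phi)\tilde\Gamma_1(Y_1\Phi)^\star$. I will use the motivating relation $\tilde Y_1=Y_1\Phi$, the sufficient condition modeled on \cite{11_2007Updating}, to close this step. This relation constrains only the eigenvectors; $\tilde\Lambda_1$ stays arbitrary, subject only to $\tilde\Gamma_1\in\mathcal{S}_{(\tilde\Lambda_1,\star,\epsilon)}$ and (\ref{gg-8}). So the new eigenvalues are genuinely new, and $\tilde A\neq A$ in general, because $\tilde\Lambda_1\tilde\Gamma_1\neq\Phi^{-1}\Lambda_1\Gamma_1\Phi^{-\star}$.

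\textbf{Step 3: matching the formulas.} Substituting the identities of Step 1 gives
\begin{equation*}
\tilde X\tilde J\tilde\Gamma\tilde X^\star=A^{-1}+\tilde Y_1\tilde\Lambda_1\tilde\Gamma_1\tilde Y_1^\star-Y_1\Lambda_1\Gamma_1Y_1^\star=\tilde A^{-1},
\end{equation*}
and
\begin{equation*}
-\tilde A\tilde X\tilde J^2\tilde\Gamma\tilde X^\star\tilde A=\tilde A\bigl(A^{-1}QA^{-1}+Y_1\Lambda_1^2\Gamma_1Y_1^\star-\tilde Y_1\tilde\Lambda_1^2\tilde\Gamma_1\tilde Y_1^\star\bigr)\tilde A=\tilde Q.
\end{equation*}
So (\ref{gg-10-1}) coincides with (\ref{gs-5}) for $(\tilde X,\tilde J,\tilde\Gamma)$. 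Theorem \ref{main-thm} then gives $\tilde Q^\star=\epsilon\tilde Q$ and that $(\tilde X,\tilde J)$ is a standard pair of $\lambda^2\tilde A+\lambda\tilde Q+\epsilon\tilde A^\star$.

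\textbf{Main obstacle.} The step I expect to be hardest is nonsingularity of $\tilde X_L$, which in the proof of Theorem \ref{main-thm} is also what guarantees that $\tilde A$ exists. I would argue blockwise. The rows of $[Y_2^{\star},\ \Lambda_2^{-\star}Y_2^{\star}]\bigl[\begin{smallmatrix}Q&A\\A&0\end{smallmatrix}\bigr]$ annihilate $[\tilde Y_1;\tilde Y_1\tilde\Lambda_1]$ whenever $\sigma(\tilde\Lambda_1)\cap\sigma(\Lambda_2^{-\star})=\emptyset$, which I would impose as a genericity condition if it is not already implied. Combined with the invertibility of $\Gamma_2$ and of $\tilde\Gamma_1$, this splits $\tilde X_L$ into two independent column blocks. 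If that route fails, I would instead assume invertibility of $A^{-1}+\tilde Y_1\tilde\Lambda_1\tilde\Gamma_1\tilde Y_1^\star-Y_1\Lambda_1\Gamma_1Y_1^\star$, implicit in (\ref{gg-10-1}), and recover nonsingularity of $\tilde X_L$ from (\ref{gs-8}) read in reverse.
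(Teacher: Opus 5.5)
Your Steps 1--3 match the paper's proof. The paper likewise sets $\tilde Y_1=Y_1\Phi$ and $\tilde\Gamma_2=\Gamma_2$, obtains (\ref{gg-12}) from (\ref{gg-8}) and the first identity in (\ref{gg-11}), and substitutes (\ref{gg-11}) into (\ref{gg-13}) to reach (\ref{gg-10-1}).

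The point needing correction is nonsingularity of $\tilde X_L$, which the paper's proof never checks. Your first route fails. Lemma \ref{lem-4} is an orthogonality relation for Jordan pairs of the original $P(\lambda)$, and $(Y_1\Phi,\tilde\Lambda_1)$ is not one. Using (\ref{gg-7}) one gets
\[
[Y_2^{\star},\ \Lambda_2^{-\star}Y_2^{\star}]\begin{bmatrix}Q&A\\A&0\end{bmatrix}\begin{bmatrix}Y_1\Phi\\Y_1\Phi\tilde\Lambda_1\end{bmatrix}=Y_2^{\star}AY_1\bigl(\Phi\tilde\Lambda_1-\Lambda_1\Phi\bigr),
\]
which does not vanish in general, whatever you assume about $\sigma(\tilde\Lambda_1)$.

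In fact nonsingularity cannot follow from the stated hypotheses alone. Take $n=1$, $\star=H$, $\epsilon=1$, $\Lambda_1=\mathrm{diag}(\lambda,1/\bar\lambda)$ with $|\lambda|\neq 1$, and $Y_1=[1,\ 1]$. Then $p=2$, $\Lambda_2$ is empty, and $\Gamma_1=g\,\Omega$ with $\Omega=\bigl[\begin{smallmatrix}0&1\\-1&0\end{smallmatrix}\bigr]$ and $0\neq g\in\mathbb{R}$. Now choose $\tilde\Lambda_1=\mathrm{diag}(\mu,1/\bar\mu)$, $\tilde\Gamma_1=\Omega$, and $\Phi=\sqrt{|g|}\,U$, where $U$ is real with first column $(1,-1)^T$ and $\det U=\mathrm{sign}(g)$. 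Condition (\ref{gg-8}) holds because $U\Omega U^T=\det(U)\,\Omega$. But $Y_1\Phi$ has a zero first column, so $\tilde X_L$ is singular and the matrix inverted in (\ref{gg-10-1}) equals $0$.

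Your fallback is the right repair and should be the argument you present. Since $\tilde J\tilde\Gamma=\tilde\Gamma\tilde J^{-\star}$, both off-diagonal blocks of $\tilde X_L\tilde\Gamma\tilde Y_L^{\star}$ in (\ref{gs-8}) equal $\tilde X\tilde J\tilde\Gamma\tilde X^{\star}=\tilde A^{-1}$, and the $(1,1)$ block is $\tilde X\tilde\Gamma\tilde X^{\star}=0$. The statement tacitly presupposes that the matrix in (\ref{gg-10-1}) is invertible, since otherwise $\tilde A$ does not exist. That invertibility makes this product nonsingular, hence $\tilde X_L$ nonsingular, and then Theorem \ref{main-thm} applies as you describe.
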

\begin{proof}
Since $(X,\ J)$ is a standard pair of $P(\lambda)$, we can see from Theorem \ref{main-thm} that there is a nonsingular matrix $\Gamma\in\mathcal{S}_{(J,\star,\epsilon)}$ such that (\ref{gs-4}) and (\ref{gs-5}) are satisfied. Since 
$\sigma(\Lambda_1)\cap\sigma(\Lambda_2^{-\star})=\emptyset$, it follows from  Lemma \ref{lem-4} that $\Gamma=\mbox{diag}(\Gamma_1, \Gamma_2)$, where $\Gamma_1$ is given by (\ref{gg-9}). Clearly, $\Gamma_j\in\mathcal{S}_{(\Lambda_j,\star,\epsilon)}$, $j=1, 2$, and (\ref{gs-4}), (\ref{gs-5}) can be rewritten as 
\begin{equation}\label{gg-11}
\left\{\begin{array}{l}
0=Y_1\Gamma_1Y_1^{\star}+Y_2\Gamma_2Y_2^{\star},\\
 A^{-1}=Y_1\Lambda_1\Gamma_1Y_1^{\star}+Y_2\Lambda_2\Gamma_2Y_2^{\star}, \\ Q=-A(Y_1\Lambda_1^2\Gamma_1Y_1^{\star}+Y_2\Lambda_2^2\Gamma_2Y_2^{\star})A.\\
 \end{array}\right.
\end{equation}

From Theorem \ref{main-thm}, we know that if there exists a matrix  $\tilde{Y}_1\in\mathbb{K}^{n\times 2n}$ such that $\bigl[\begin{smallmatrix}
\tilde{Y}\\
\tilde{Y}\tilde{\Lambda}\\
\end{smallmatrix}\bigr]$ is nonsingular, where $\tilde{Y}=[\tilde{Y}_1, Y_2]$, $\tilde{J}=\mbox{diag}(\tilde{\Lambda}_1, \Lambda_2)$, and a nonsingular matrix $\tilde{\Gamma}_2\in\mathcal{S}_{(\Lambda_2,\star,\epsilon)}$ such that 
$\mbox{diag}(\tilde{\Gamma}_1, \tilde{\Gamma}_2)\in\mathcal{S}_{(\tilde{J},\star,\epsilon)}$ and 
\begin{equation}\label{gg-12}
\tilde{Y}_1\tilde{\Gamma}_1\tilde{Y}_1^{\star}+Y_2\tilde{\Gamma}_2\tilde{Y}_2^{\star}=0,
\end{equation}
then the matrices 
\begin{equation}\label{gg-13}
\tilde{A}=\left(\tilde{Y}_1\tilde{\Lambda}_1\tilde{\Gamma}_1\tilde{Y}_1^{\star}+Y_2\Lambda_2\tilde{\Gamma}_2Y_2^{\star}\right)^{-1},\ \ 
\tilde{Q}=-\tilde{A}(\tilde{Y}_1\tilde{\Lambda}_1^2\tilde{\Gamma}_1\tilde{Y}_1^{\star}+Y_2\Lambda_2^2\tilde{\Gamma}_2Y_2^{\star})\tilde{A},
\end{equation}
form a solution of PMUP. We set $\tilde{Y}_1=Y_1\Phi$ and $\tilde{\Gamma}_2=\Gamma_2$. Then, it follows from (\ref{gg-8}) and (\ref{gg-11}) that (\ref{gg-12}) is satisfied. Substituting (\ref{gg-11}) into 
(\ref{gg-13}), we can obtain that the solution can be given by (\ref{gg-10-1}).
\end{proof}

It can be seen that Theorem \ref{thm-eep} imposes no restrictions on the algebraic and geometric multiplicities of eigenvalues of $\Lambda_1$ and $\tilde{\Lambda}_1$. And the solutions given by (\ref{gg-10-1}) do not need any infromation of $(Y_2,\ \Lambda_2)$, which are generally unknown in practice.  Next, we will characterize the solutions of PMUP for both simple and multiple eigenvalues. 

\subsection{Solutions of PMUP with simple eigenvalues}

In this subsection, we always assume that $\{\lambda_j\}_{j=1}^p$ and $\{\tilde{\lambda}_j\}_{j=1}^p$ are simple, and $|\lambda_j|\neq 1$, $|\tilde{\lambda}_j|\neq 1$, $j=1,\ldots, p$. First, we give the form of $\Gamma_1$  and the matrix $\tilde{\Gamma}_1$ is always chosen in the simpler form. 

Case 1. $(T, \epsilon)$-palindromic with $\mathbb{K}=\mathbb{R}$:
$$
\Lambda_{1}=\mbox{diag}\left(\begin{bmatrix}
\alpha_1 & \beta_1\\
-\beta_1 & \alpha_1\\
\end{bmatrix},\begin{bmatrix}
\alpha'_1 & \beta'_1\\
-\beta'_1 & \alpha'_1\\
\end{bmatrix},\cdots,\begin{bmatrix}
\alpha_{k_1} & \beta_{k_1}\\
-\beta_{k_1} & \alpha_{k_1}\\
\end{bmatrix},\begin{bmatrix}
\alpha'_{k_1} & \beta'_{k_1}\\
-\beta'_{k_1} & \alpha'_{k_1}\\
\end{bmatrix}, \lambda_{1}, \frac{1}{\lambda_{1}}, \ldots, \lambda_{k_2}, \frac{1}{\lambda_{k_2}} \right)\in\mathbb{R}^{p\times p},$$
$$
\tilde{\Lambda}_{1}=\mbox{diag}\left(\begin{bmatrix}
\tilde{\alpha}_1 & \tilde{\beta}_1\\
-\tilde{\beta}_1 & \tilde{\alpha}_1\\
\end{bmatrix},\begin{bmatrix}
\tilde{\alpha}'_1 & \tilde{\beta}'_1\\
-\tilde{\beta}'_1 & \tilde{\alpha}'_1\\
\end{bmatrix},\cdots,\begin{bmatrix}
\tilde{\alpha}_{\tilde{k}_1} & \tilde{\beta}_{\tilde{k}_1}\\
-\tilde{\beta}_{\tilde{k}_1} & \tilde{\alpha}_{\tilde{k}_1}\\
\end{bmatrix},\begin{bmatrix}
\tilde{\alpha}'_{\tilde{k}_1} & \tilde{\beta}'_{\tilde{k}_1}\\
-\tilde{\beta}'_{k_1} & \tilde{\alpha}'_{\tilde{k}_1}\\
\end{bmatrix}, \tilde{\lambda}_{1}, \frac{1}{\tilde{\lambda}_{1}}, \ldots, \tilde{\lambda}_{\tilde{k}_2}, \frac{1}{\tilde{\lambda}_{\tilde{k}_2}} \right)\in\mathbb{R}^{p\times p},$$
 where $\alpha_j'+i\beta'_j=\frac{1}{\alpha_j+i\beta_j}$, $\tilde{\alpha}'_l+i\tilde{\beta}'_l=\frac{1}{\tilde{\alpha}_l+i\tilde{\beta}_l}$, $j=1,\ldots,k_1$, $l=1,\ldots,\tilde{k}_1$, and $4k_1+2k_2=4\tilde{k}_1+2\tilde{k}_2=p$. 
 By straightforward  calculation, we can obtain that the matrix $\Gamma_{1}\in\mathcal{S}_{(\Lambda_1,T,\epsilon)}$ has the following form   
\begin{equation*}\Gamma_{1}=\mbox{diag}\left(\begin{bmatrix}
0 & U_1\\
-\epsilon U_1 & 0\\
\end{bmatrix},\cdots, \begin{bmatrix}
0 & U_{k_1}\\
-\epsilon U_{k_1} & 0\\
\end{bmatrix}, \begin{bmatrix}
0 & \xi_{1}\\
-\epsilon \xi_{1} & 0\\
\end{bmatrix},\cdots, \begin{bmatrix}
0 & \xi_{k_2}\\
-\epsilon \xi_{k_2} & 0\\
\end{bmatrix}\right),
\end{equation*}
where $U_j=\bigl[\begin{smallmatrix}
a_j & b_j\\
b_j & -a_j\\
\end{smallmatrix}\bigr]\in\mathbb{R}^{2\times 2}$ and $a_j,b_j,\xi_j\in\mathbb{R}$ are nonzero. By Algorithm 4.1 in \cite{Cai_2010}, we can see that the matrix $U_j$ is congruent to $\bigl[\begin{smallmatrix}
1 & 0\\
0 & -1\\
\end{smallmatrix}\bigr]$. Choose $\tilde{\Gamma}_{1}$ as 
\begin{equation}\label{xgx-2}
\tilde{\Gamma}_{1}=\mbox{diag}\left(\begin{bmatrix}
0 & \tilde{U}_1\\
-\epsilon \tilde{U}_1 & 0\\
\end{bmatrix},\cdots, \begin{bmatrix}
0 & \tilde{U}_{\tilde{k}_1}\\
-\epsilon \tilde{U}_{\tilde{k}_1} & 0\\
\end{bmatrix}, \begin{bmatrix}
0 & 1\\
-\epsilon  & 0\\
\end{bmatrix},\cdots, \begin{bmatrix}
0 & 1\\
-\epsilon  & 0\\
\end{bmatrix}\right)\in\mathbb{R}^{p\times p},
\end{equation}
where $\tilde{U}_l=\bigl[\begin{smallmatrix}
1 & 0\\
0 & -1\\
\end{smallmatrix}\bigr]$, $l=1,\ldots,\tilde{k}_1$. It is easy to verify that $\tilde{\Gamma}_{1}\in\mathcal{S}_{(\tilde{\Lambda}_{1},T,\epsilon)}$, and there exist two nonsingular matrices $P_1, P_2\in\mathbb{R}^{p\times p}$ such that 
\begin{equation}\label{gg-14}
P_1\tilde{\Gamma}_{1}P_1^T=P_2\Gamma_{1}P_2^T=\begin{bmatrix}
0 & I_q\\
-\epsilon I_q & 0\\
\end{bmatrix}:=\mathcal{R}.
\end{equation}

Case 2. $(H, \epsilon)$-palindromic with $\mathbb{K}=\mathbb{R}$:
\begin{equation*}\Lambda_1=\mbox{diag}\left(\lambda_1,\frac{1}{\bar{\lambda}_1},\lambda_2,\frac{1}{\bar{\lambda}_2}\ldots,\lambda_{q},\frac{1}{\bar{\lambda}_q}\right)\in\mathbb{C}^{p\times p},\end{equation*} \begin{equation*}\tilde{\Lambda}_1=\mbox{diag}\left(\tilde{\lambda}_1,\frac{1}{\bar{\tilde{\lambda}}_1},\tilde{\lambda}_2,\frac{1}{\bar{\tilde{\lambda}}_2},\ldots,\tilde{\lambda}_{q},\frac{1}{\bar{\tilde{\lambda}}_q}\right)\in\mathbb{C}^{p\times p}.\end{equation*}
Then, we have 
\begin{equation*}\Gamma_1=\mbox{diag}\left(\begin{bmatrix}
0 & \eta_1\\
-\epsilon \eta_1 & 0\\
\end{bmatrix},\begin{bmatrix}
0 & \eta_2\\
-\epsilon \eta_2 & 0\\
\end{bmatrix}, \cdots, \begin{bmatrix}
0 & \eta_{q}\\
-\epsilon \eta_{q} & 0\\
\end{bmatrix}\right)\in\mathbb{C}^{p\times p},\end{equation*}
where $\eta_1,\ldots, \eta_q$ are nonzero. Choose $\tilde{\Gamma}_1$ as 
\begin{equation}\label{xgx-3}
\tilde{\Gamma}_1=\mbox{diag}\left(\begin{bmatrix}
0 & 1\\
-\epsilon & 0\\
\end{bmatrix},\begin{bmatrix}
0 & 1\\
-\epsilon & 0\\
\end{bmatrix}, \cdots, \begin{bmatrix}
0 & 1\\
-\epsilon & 0\\
\end{bmatrix}\right)\in\mathbb{C}^{p\times p}.
\end{equation}
Clearly, $\tilde{\Gamma}_1\in\mathcal{S}_{(\tilde{\Lambda}_1,H,\epsilon)}$, and there exist two nonsingular matrices $P_3, P_4\in\mathbb{C}^{p\times p}$ such that 
\begin{equation}\label{gg-14-1}
P_3\tilde{\Gamma}_1P_3^H=P_4\Gamma_1P_4^H=\begin{bmatrix}
0 & I_q\\
-\epsilon I_q & 0\\
\end{bmatrix}:=\mathcal{R}.
\end{equation}

Substituting (\ref{gg-14}) and (\ref{gg-14-1}) into (\ref{gg-8}) leads to 
\begin{equation}\label{gg-15}
\widetilde{\Phi}\mathcal{R}\widetilde{\Phi}^{\star}=\mathcal{R},
\end{equation}
where $\widetilde{\Phi}=P_{2t}\Phi P_{2t-1}^{-1}$, $t=1, 2.$ By straightforward calculation, the nonsingular solution $\widetilde{\Phi}$ of (\ref{gg-15}) can be characterized by the following lemma.
\begin{lemma}\label{lem-5}
 The solution of (\ref{gg-15})  can be given by  
\begin{equation}\label{gg-16-x}
\widetilde{\Phi}=\begin{bmatrix}
\Phi_{1} & \Phi_{2}\\
-\epsilon\Phi_2 & \Phi_1\\
\end{bmatrix},
\end{equation}
where $\Phi_1, \Phi_2$ are arbitrary $q\times q$ matrices which satisfy $\widetilde{\Phi}\widetilde{\Phi}^{\star}=I_p$. 
\end{lemma}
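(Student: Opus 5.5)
The plan is to interpret Lemma \ref{lem-5} as a characterization of the solutions $\widetilde{\Phi}$ of (\ref{gg-15}) that also satisfy $\widetilde{\Phi}\widetilde{\Phi}^{\star}=I_p$. These are the norm-preserving ones, which is the natural choice given the freedom in $\Phi$. The whole argument rests on one observation. For such $\widetilde{\Phi}$, the equation (\ref{gg-15}) is equivalent to the commutation relation $\widetilde{\Phi}\mathcal{R}=\mathcal{R}\widetilde{\Phi}$. The commutation relation then forces the block form (\ref{gg-16-x}).

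\emph{Sufficiency.} Take $\widetilde{\Phi}$ of the form (\ref{gg-16-x}) with $\widetilde{\Phi}\widetilde{\Phi}^{\star}=I_p$. A direct block multiplication gives
$$\mathcal{R}\widetilde{\Phi}=\begin{bmatrix}-\epsilon\Phi_2 & \Phi_1\\ -\epsilon\Phi_1 & -\epsilon\Phi_2\end{bmatrix}=\widetilde{\Phi}\mathcal{R}.$$
Therefore
$$\widetilde{\Phi}\mathcal{R}\widetilde{\Phi}^{\star}=\mathcal{R}\widetilde{\Phi}\widetilde{\Phi}^{\star}=\mathcal{R},$$
so $\widetilde{\Phi}$ solves (\ref{gg-15}). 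It is also nonsingular, being unitary, so it is admissible.

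\emph{Necessity within the unitary class.} Suppose $\widetilde{\Phi}$ satisfies (\ref{gg-15}) and $\widetilde{\Phi}^{\star}=\widetilde{\Phi}^{-1}$. Multiplying (\ref{gg-15}) on the right by $\widetilde{\Phi}$ gives $\widetilde{\Phi}\mathcal{R}=\mathcal{R}\widetilde{\Phi}$. Partition $\widetilde{\Phi}=\bigl[\begin{smallmatrix}\Phi_{11}&\Phi_{12}\\ \Phi_{21}&\Phi_{22}\end{smallmatrix}\bigr]$ conformally with $\mathcal{R}$. Comparing blocks of $\mathcal{R}\widetilde{\Phi}$ and $\widetilde{\Phi}\mathcal{R}$ gives $\Phi_{21}=-\epsilon\Phi_{12}$ and $\Phi_{22}=\Phi_{11}$. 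The remaining two block identities are then automatically consistent, using $\epsilon^2=1$. Setting $\Phi_1=\Phi_{11}$ and $\Phi_2=\Phi_{12}$ yields exactly (\ref{gg-16-x}).

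The main obstacle is interpreting the statement correctly rather than any computation. Without the unitarity constraint, the solution set of (\ref{gg-15}) is the full isometry group of the form $\mathcal{R}$, i.e.\ a symplectic-type or orthogonal/unitary-type group according to $\epsilon$ and $\star$. That group contains non-unitary elements that need not have the block form (\ref{gg-16-x}), for example $\mathrm{diag}(G,G^{-\star})$ with $G$ non-unitary. So I would state explicitly that the lemma describes the family $\{\widetilde{\Phi}:\widetilde{\Phi}\mathcal{R}\widetilde{\Phi}^{\star}=\mathcal{R},\ \widetilde{\Phi}\widetilde{\Phi}^{\star}=I_p\}$, and that every member is a solution. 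That is all Theorem \ref{thm-eep} needs: via $\Phi=P_{2t}^{-1}\widetilde{\Phi}P_{2t-1}$ it produces valid $\Phi$ and hence valid updates (\ref{gg-10-1}).
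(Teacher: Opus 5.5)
Your proof is correct, and it is the ``straightforward calculation'' the paper invokes without writing out. The block form (\ref{gg-16-x}) is exactly the commutant of $\mathcal{R}$, so together with $\widetilde{\Phi}\widetilde{\Phi}^{\star}=I_p$ it gives $\widetilde{\Phi}\mathcal{R}\widetilde{\Phi}^{\star}=\mathcal{R}\widetilde{\Phi}\widetilde{\Phi}^{\star}=\mathcal{R}$, and conversely every unitary solution commutes with $\mathcal{R}$. Your caveat is also right, and it is sharper than the paper's wording, which says the nonsingular solutions are ``characterized'' by this lemma. The family is only the unitary part of the isometry group of $\mathcal{R}$, as $\mbox{diag}(G,G^{-\star})$ with $G$ non-unitary shows, but sufficiency is all that Theorem~\ref{thm-eep} and Algorithm~\ref{alg-eep} require.
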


\begin{algorithm}[h]
\caption{ Finding solutions of PMUP with simple eigenvalues.} 
\label{alg-eep} 
\begin{algorithmic} 
\STATE {\bf Input}: $(\star,\epsilon)$-palindromic polynomial $\lambda^2A+\lambda Q+\epsilon A^{\star}$, $\{x_j, \lambda_j\}_{j=1}^p$ and $\{\tilde{\lambda}_j\}_{j=1}^p$.
\STATE {\bf Output} The matrices $\tilde{A}$, $\tilde{Q}$.
\STATE \textbf{1.} Form the matrices $Y_{1}, \Lambda_{1}, \tilde{\Lambda}_{1}$ and compute $\Gamma_{1}$ by (\ref{gg-9}).
\STATE \textbf{2.} According to the values of $\star$ and $\epsilon$, choose the matrix $\tilde{\Gamma}_{1}$ as (\ref{xgx-2}) or (\ref{xgx-3}).
\STATE \textbf{3.} Computing the matrices $P_1, P_2$ by (\ref{gg-14}) or $P_3, P_4$ by (\ref{gg-14-1}). 
\STATE \textbf{4.} Randomly choose $\widetilde{\Phi}\in\mathbb{K}^{p\times p}$ by (\ref{gg-16-x}), and compute $\Phi=P_{2}\widetilde{\Phi}P_{1}^{-1}$ or  $\Phi=P_{4}\widetilde{\Phi}P_{3}^{-1}$.
\STATE \textbf{5.} Set $\tilde{Y}_{1}=Y_{1}\Phi$ and compute $\tilde{A}$ and $\tilde{Q}$ by (\ref{gg-10-1}).
\end{algorithmic}
\end{algorithm}

\subsection{Solutions of PMUP with multiple eigenvalues}
 
In this subsection, we only consider the PMUP for the  $(H,\epsilon)$-palindromic system, and the case of $(T,\epsilon)$-palindromic can be discussed in a manner similar way. Without loss of generality, we assume that
all the distinct eigenvalues of $\{\lambda_j\}_{j=1}^p$ are $\lambda_1,\frac{1}{\bar{\lambda}_1},\ldots,\lambda_k,\frac{1}{\bar{\lambda}_k},\lambda_{2k+1},\ldots,\lambda_q$ with $|\lambda_j|\neq 1$ for $j=1,\ldots, k$, $|\lambda_j|=1$ for $j=2k+1,\ldots,q$, and  the algebraic multiplicity of $\lambda_j$ is $n_j$, $j=1,\ldots, q$. Let 
\begin{align}
&\Lambda_1=\mbox{diag}\left(J^{(1)}(\lambda_1),J^{(1)}(1/\bar{\lambda}_1)\ldots,J^{(1)}(\lambda_k),J^{(1)}(1/\bar{\lambda}_k),J^{(1)}(\lambda_{2k+1}),\ldots,J^{(1)}(\lambda_q)\right),\label{xg-7-1}\\
& Y_1=\left[Y^{(1)}_1,Y^{(1)}_2,\ldots,Y^{(1)}_q\right],\label{xgx-7-2}
\end{align}
where $J^{(1)}(\lambda_j)=\lambda_jI_{n_j}+N_j\in\mathbb{C}^{n_j\times n_j}$ is the complex Jordan canonical form of $\lambda_j$ and the columns of $Y^{(1)}_j\in\mathbb{C}^{n\times n_j}$ are corresponding generalized eigenvectors.
Suppose that all the distinct eigenvalues of $\{\tilde{\lambda}_j\}_{j=1}^p$ are $\mu_1,\frac{1}{\bar{\mu}_1},\ldots,\mu_{\tilde{k}},\frac{1}{\bar{\mu}_{\tilde{k}}}$, $\mu_{2\tilde{k}+1},\ldots,\mu_q$ with 
$|\mu_j|\neq 1$ for $j=1,\ldots, \tilde{k}$, $|\mu_j|=1$ for $j=2\tilde{k}+1,\ldots,q$, and the algebraic multiplicity of $\mu_j$ is ${\tilde{n}}_j$, $j=1,\ldots,q$. Let
\begin{equation}\label{xgx-8}
\tilde{\Lambda}_1=\mbox{diag}\left(J^{(1)}(\mu_1),J^{(1)}(1/\bar{\mu}_1)\ldots,J^{(1)}(\mu_{\tilde{k}}),J^{(1)}(1/\bar{\mu}_{\tilde{k}}),J^{(1)}(\mu_{2{\tilde{k}}+1}),\ldots,J^{(1)}(\mu_q)\right),
\end{equation}
where $J^{(1)}(\mu_j)=\mu_jI_{\tilde{n}_j}+N_j\in\mathbb{C}^{{\tilde{n}}_j\times {\tilde{n}}_j}$ is the complex Jordan canonical form of $\mu_j$. In this section, we always assume that $\sum_{j=1}^kn_j\geq \sum_{j=1}^{\tilde{k}}\tilde{n}_j$. 

Since $\sigma(\Lambda_1)\cap \sigma(\Lambda_2^{-H})=\emptyset$, we can obtain from Lemma \ref{lem-4} and Theorem \ref{thm-gamma-2} that the matrix $\Gamma_1$ defined by (\ref{gg-9}) satisfies $\Gamma_1\in\mathcal{S}_{(\Lambda_1,H,\epsilon)}$ and has the following form 
\begin{equation}\label{xgx-9}
\begin{array}{ll}
\Gamma_1=&\mbox{diag}\left(\begin{bmatrix}
0 & S_1\\
-\epsilon S_1^H & 0\\
\end{bmatrix},\cdots,\begin{bmatrix}
0 & S_{k}\\
-\epsilon S_{k}^H & 0\\
\end{bmatrix},S_{2k+1},\ldots,S_q\right),\\
\end{array}
\end{equation}
where $S_j\in\Theta_{(\lambda_j, 1/\lambda_j, j)}$ for $j=1,\ldots, k$ and $S_{j}\in\Theta_{(\lambda_j, \bar{\lambda}_j, j)}$ with $S_{j}^H=-\epsilon S_{j}$ for $j=2k+1,\ldots, q$.  Randomly choose $\tilde{\Gamma}_1$ as 
\begin{equation}\label{xgx-9-1}
\begin{array}{ll}
\tilde{\Gamma}_1=&\mbox{diag}\left(\begin{bmatrix}
0 & \tilde{S}_1\\
-\epsilon \tilde{S}_1^H & 0\\
\end{bmatrix},\cdots,\begin{bmatrix}
0 & \tilde{S}_{\tilde{k}}\\
-\epsilon \tilde{S}_{\tilde{k}}^H & 0\\
\end{bmatrix},\tilde{S}_{2\tilde{k}+1},\ldots,\tilde{S}_q\right),\\
\end{array}
\end{equation}
where $\tilde{S}_j\in\Theta_{(\mu_j, 1/\mu_j, j)}$ for $j=1,\ldots, \tilde{k}$ and $\tilde{S}_{j}\in\Theta_{(\mu_j, \bar{\mu}_j, j)}$ with $\tilde{S}_{j}^H=-\epsilon \tilde{S}_{j}$ for $j=2\tilde{k}+1,\ldots, q$. From  Theorem \ref{thm-gamma-2} and the definition of $\Theta_{(a,b,j)}$ in (\ref{gs-19}), it is easy to verify that the matrix $\tilde{\Gamma}_1$ given by (\ref{xgx-9-1}) satisfies $\tilde{\Gamma}_1\in\mathcal{S}_{(\tilde{\Lambda}_1,H,\epsilon)}$. 

Similar to the proof of Theorem \ref{thm-s-2}, there exist a nonsingular matrix $\tilde{P}_1\in\mathbb{C}^{q\times q}$ such that 
\begin{align}
&\tilde{P}_1\Gamma_1\tilde{P}_1^H=\mbox{diag}\left(\begin{bmatrix}
 0 & I_{n_1}\\
 -\epsilon I_{n_1} & 0\\
 \end{bmatrix},\cdots,\begin{bmatrix}
 0 & I_{n_{k}}\\
 -\epsilon I_{n_{k}} & 0\\
 \end{bmatrix}, \mathfrak{E}_{2k+1}, \ldots, \mathfrak{E}_{q}\right),\label{xgx-10-1}
\end{align}
where $\mathfrak{E}_j=\mbox{diag}(\pm {\rm i}, \ldots, \pm {\rm i})\in\mathbb{C}^{n_j\times n_j}$ when $\epsilon=1$ and $\mathfrak{E}_j=\mbox{diag}(\pm 1, \ldots, \pm 1)\in\mathbb{C}^{n_j\times n_j}$ when $\epsilon=-1$. Let $m=\sum_{t=1}^kn_t-\sum_{t=1}^{\tilde{k}}\tilde{n}_{t}$.
As is known, the matrix $\bigr[\begin{smallmatrix}
  0 & I_m\\
  -\epsilon I_m & 0\\
  \end{smallmatrix}\bigr]$ is unitary similar to $\mathfrak{N}$, where $\mathfrak{N}=\mbox{diag}({\rm i}I_m, -{\rm i}I_m)$ when $\epsilon=1$ and $\mathfrak{N}=\mbox{diag}(I_m, -I_m)$ when $\epsilon=-1$. Therefore, we can always choose suitable $\tilde{\Gamma}_1$ of form (\ref{xgx-9-1}) which satisfies 
  \begin{align}
&\tilde{P}_2\tilde{\Gamma}_1\tilde{P}_2^H=\mbox{diag}\left(\begin{bmatrix}
 0 & I_{\tilde{n}_1}\\
 -\epsilon I_{\tilde{n}_1} & 0\\
 \end{bmatrix},\cdots,\begin{bmatrix}
 0 & I_{\tilde{n}_{\tilde{k}}}\\
 -\epsilon I_{\tilde{n}_{\tilde{k}}} & 0\\
 \end{bmatrix},\begin{bmatrix}
 0 & I_{m}\\
 -\epsilon I_{m} & 0\\
 \end{bmatrix}, \mathfrak{E}_{2k+1}, \ldots, \mathfrak{E}_{q}\right),\label{xgx-10-2}
\end{align}
where $\tilde{P}_2\in\mathbb{C}^{q\times q}$ is nonsingular.
Then, we can see from (\ref{xgx-10-1}) and (\ref{xgx-10-2}) that there exist two permutation matrices $\tilde{Q}_1,\tilde{Q}_2\in\mathbb{C}^{q\times q}$ such that 
 \begin{equation}\label{xgx-11}
 \tilde{Q}_1\tilde{P}_1\Gamma_1\tilde{P}_1^H\tilde{Q}^H_1=\tilde{Q}_2\tilde{P}_2\tilde{\Gamma}_1\tilde{P}_2^H\tilde{Q}^H_2=
 \mbox{diag}\left(\begin{bmatrix}
 0 & I_{s}\\
 -\epsilon I_{s} & 0\\
 \end{bmatrix},\mathfrak{D}\right):=\mathcal{K},
 \end{equation}
 where $s=\sum_{t=1}^kn_t$ and $\mathfrak{D}=\mbox{diag}(\mathfrak{E}_{2k+1}, \ldots, \mathfrak{E}_{q})$ is of order $(q-2s)\times (q-2s)$.
 
 Substituting (\ref{xgx-11})  into (\ref{gg-8}) leads to 
\begin{equation}\label{gg-15}
\widetilde{\Phi}\mathcal{K}\widetilde{\Phi}^{H}=\mathcal{K},
\end{equation}
where $\widetilde{\Phi}=\tilde{Q}_1\tilde{P}_1\Phi\tilde{P}_2^{-1}\tilde{Q}_2^{-1}$. By Lemma \ref{lem-5}, the solution of (\ref{gg-15}) can be given by  
\begin{equation}\label{gg-16-xx}
\widetilde{\Phi}=\mbox{diag}\left(\begin{bmatrix}
\Phi_{1} & \Phi_{2}\\
-\epsilon\Phi_2 & \Phi_1\\
\end{bmatrix},I_{q-2s}\right),
\end{equation}
where $\Phi_1, \Phi_2\in\mathbb{C}^{s\times s}$ are arbitrary which satisfy $\widetilde{\Phi}\widetilde{\Phi}^{H}=I_q$. 
We summarize the discussion above in the following Algorithm \ref{alg-eep-1}.
\begin{algorithm}[h]
\caption{ Finding solutions of PMUP with multiple eigenvalues.} 
\label{alg-eep-1} 
\begin{algorithmic} 
\STATE {\bf Input}: $(H,\epsilon)$-palindromic polynomial $\lambda^2A+\lambda Q+\epsilon A^{H}$, $\{x_j, \lambda_j\}_{j=1}^p$ and $\{\tilde{\lambda}_j\}_{j=1}^p$.
\STATE {\bf Output} The matrices $\tilde{A}$, $\tilde{Q}$.
\STATE \textbf{1.} Form the matrices $Y_{1}, \Lambda_{1}, \tilde{\Lambda}_{1}$ and compute $\Gamma_{1}$ by (\ref{gg-9}).
\STATE \textbf{2.} Randomly choose the matrix $\tilde{\Gamma}_{1}$ as (\ref{xgx-9-1}) and compute $m=\sum_{t=1}^kn_t-\sum_{t=1}^{\tilde{k}}\tilde{n}_{t}$.
\STATE \textbf{3.} Computing the matrices $\tilde{Q}_1, \tilde{Q}_2, \tilde{P}_1, \tilde{P}_2$ by (\ref{xgx-10-1}), (\ref{xgx-10-2}) and (\ref{xgx-11}), respectively. 
\STATE \textbf{4.} Randomly choose $\widetilde{\Phi}\in\mathbb{C}^{q\times q}$ by (\ref{gg-16-xx}), and compute $\Phi=\tilde{P}_1^{-1}\tilde{Q}_1\widetilde{\Phi}\tilde{Q}_2\tilde{P}_2$.
\STATE \textbf{5.} Set $\tilde{Y}_{1}=Y_{1}\Phi$ and compute $\tilde{A}$ and $\tilde{Q}$ by (\ref{gg-10-1}).
\end{algorithmic}
\end{algorithm}

\begin{remark}
The assumption of $\sum_{j=1}^kn_j\geq \sum_{j=1}^{\tilde{k}}\tilde{n}_j$ in this subsection is reasonable, since the model one eigenvalues of the original system can not be replaced by the eigenvalues which are not model one. In fact, If $\sum_{j=1}^kn_j< \sum_{j=1}^{\tilde{k}}\tilde{n}_j$. Suppose that $\lambda_1,\lambda_2$ are two eigenvalues of $(H,1)$-palindromic system with $|\lambda_1|=|\lambda_2|=1$, which need to be replaced, and the corresponding matrix $\Gamma_1$ of $\mbox{diag}(\lambda_1, \lambda_2)$ is $\mbox{diag}({\rm i},\ {\rm i})$. It is easy to verify that $(\lambda_1,\lambda_2)$ can not be replaced by $(\mu, \frac{1}{\bar{\mu}})$ with $|\mu|\neq 1$, since the corresponding matrix $\tilde{\Gamma}_1=\begin{bmatrix}
0 & a\\
-\bar{a} & 0\\
\end{bmatrix}$ of $(\mu, \frac{1}{\bar{\mu}})$ can not be congruent to $\Gamma_1$. If $\sum_{j=1}^kn_j\geq \sum_{j=1}^{\tilde{k}}\tilde{n}_j$, similar to the discussion of (\ref{xgx-10-2}), we can always choose suitable $\tilde{\Gamma}_1$ for the given $\Gamma_1$ such that the sufficient solvable condition (\ref{gg-8}) of Theorem \ref{thm-eep} is satisfied. 

\end{remark}

\section{Numerical examples}

\begin{example}\label{exa-eep1}
In this example, we consider the PMUP of $T$-palindromic system $P(\lambda)=\lambda^2A+\lambda Q+A^T$ with 
\begin{equation*}A=\begin{bmatrix}
0.8147 &   0.6324 &   0.9575  &  0.9572\\
    0.9058 &   0.0975 &   0.9649  &  0.4854\\
    0.1270 &   0.2785 &   0.1576  &  0.8003\\
    0.9134 &   0.5469 &   0.9706  &  0.1419\\
    \end{bmatrix},\ \ Q=\begin{bmatrix}
1.8435 &   1.5715   & 1.4709 &   1.6150\\
    1.5715 &  -0.0714 &   1.6069 &   1.1052\\
    1.4709 &   1.6069 &   1.4863 &   1.0983\\
    1.6150 &   1.1052 &   1.0983 &   0.0637\\
    \end{bmatrix},\end{equation*}
    which are randomly generated. The eigenvalues of $P(\lambda)$ are $\{-1.1492\pm 0.5941i$, $-0.6866\pm 0.3550i$,
    $-4.1054$, $1.9390$, $0.5157$, $-0.2436\}$. Suppose that all the real eigenvalues are replaced by $\left\{-1+2i,-1-2i,\frac{1}{-1+2i},\frac{1}{-1-2i}\right\}$. 
\end{example}
By the given information, we can get $\Gamma_{1}=\mbox{diag}\left(\bigl[\begin{smallmatrix}
0 & 3.0441\\
-3.0441 & 0\\
\end{smallmatrix}\bigr], \bigl[\begin{smallmatrix}
0 & 14.8606\\
-14.8606 & 0\\
\end{smallmatrix}\bigr]\right).$
     Choosing $\widetilde{\Phi}=I_4$ and $\tilde{\Gamma}_{1}=\bigl[\begin{smallmatrix}
0 & U\\
-U & 0\\
\end{smallmatrix}\bigr]$ with $U=\mbox{diag}(1,-1)$, we can obtain from Algorithm \ref{alg-eep} that \begin{equation*}Y_{1}=\begin{bmatrix}
-0.5686 &  -0.5385 &   0.6906 &  -0.6729\\
   -0.5097 &  -0.1544 &  -0.1127 &   0.1516\\
    0.6456 &   0.4735 &  -0.7136 &   0.7146\\
   -0.0028 &   0.6797 &   0.0342 &   0.1165\\
   \end{bmatrix},\ \tilde{Y}_{1}=\begin{bmatrix}
1.6391 &  -9.9993 &  -0.5686  &  0.6906\\
    0.4700 &   2.2530 &  -0.5097 &  -0.1127\\
   -1.4414 &  10.6196 &   0.6456 &  -0.7136\\
   -2.0692 &   1.7319 &  -0.0028 &   0.0342\\
\end{bmatrix},\end{equation*}
and
\begin{equation*}\tilde{A}=\begin{bmatrix}
0.2835 &   0.3007 &   0.0808  &  0.7761\\
    0.2886 &   0.2748 &   0.2803 &  -0.1003\\
    0.3205 &   0.3777 &   0.0966 &   0.6529\\
    0.6080 &  -0.1364 &   0.4769 &   0.5826\\
    \end{bmatrix},\ \ \tilde{Q}=\begin{bmatrix}
     0.7773 &   0.4984&    0.6229  &  1.3387\\
    0.4984  &  0.5087 &   0.4176   &-0.2612\\
    0.6229  &  0.4176 &   0.4187   & 1.2751\\
    1.3387 &  -0.2612 &   1.2751   & 0.9606\\
    \end{bmatrix},\end{equation*}
which satisfy  
\begin{equation*} ||\tilde{A}\tilde{Y}_{1}\tilde{\Lambda}_{1}^2+\tilde{Q}\tilde{Y}_{1}\tilde{\Lambda}_{1}-\tilde{A}^T\tilde{Y}_{1}||_F=2.1331e-13, \ \  ||\tilde{A}Y_2{\Lambda}_2^2+\tilde{Q}Y_2{\Lambda}_2-\tilde{A}^TY_2||_F=3.7007e-14.
\end{equation*}

\begin{example}\label{exa-eep2}
Consider the PMUP of the $T$-anti-palindromic system $P(\lambda)=\lambda^2A+\lambda Q-A^T$ with 
\begin{equation*}A=\begin{bmatrix}
   1.4218  &  0.6557 &   0.6787  &  0.6555\\
    0.9157 &   1.0357&    0.7577 &   0.1712\\
    0.7922 &   0.8491&    0.7431 &   0.7060\\
    0.9595 &   0.9340&    0.3922 &   1.0318\\
\end{bmatrix}, Q=\begin{bmatrix}
       0   &-1.2734  &  0.8305 &   3.0438\\
    1.2734 &        0&    1.6864&   -2.0615\\
   -0.8305 &  -1.6864&         0&   -1.1703\\
   -3.0438 &   2.0615&    1.1703&         0\\
   \end{bmatrix}.\end{equation*}
   The eigenvalues of $P(\lambda)$ are $\{-3.4598\pm 4.2250i, -0.1150\pm 0.1415i, 1.2894\pm 2.5693i, 0.1560\pm 0.3109i\}$. Suppose that the eigenvalues $\{1.2894\pm 2.5693i, 0.1560\pm 0.3109i\}$ are replaced by $\left\{-1+2i,-1-2i,\frac{1}{-1+2i},\frac{1}{-1-2i}\right\}$.
\end{example}

By the given information, we can get $\Gamma_1=\bigl[\begin{smallmatrix}
0  & V\\
V & 0\\
\end{smallmatrix}\bigr]$, where $V=\bigl[\begin{smallmatrix}
0.1262 & 1.2571\\
1.2571 & -0.1262\\
\end{smallmatrix}\bigr]$.  Taking $\widetilde{\Phi}=I_4$ and $\tilde{\Gamma}_{1}=\bigl[\begin{smallmatrix}
0 & U\\
U & 0\\
\end{smallmatrix}\bigr]$ with $U=\mbox{diag}(1,-1)$, we can obtain from Algorithm \ref{alg-eep} that 
\begin{equation*}Y_{1}=\begin{bmatrix}
 -0.5446  &  0.0824  & -0.4964&   -0.0915\\
   -0.4204&    0.2996&   -0.4058&    0.1124\\
    1.1766&   -0.0364&    1.1796&   -0.3096\\
    0.0130&    0.2104&   -0.0728&   -0.2744\\
\end{bmatrix},\ \ \tilde{Y}_{1}=\begin{bmatrix}
 0.3918  & -0.4794   & 0.4827&   -0.2980\\
    0.1245&   -0.5667&    0.2535&   -0.3997\\
   -0.9533&    0.9175&   -0.7498&    1.1475\\
   -0.1694&   -0.1655&    0.2676&    0.1738\\
    \end{bmatrix},\end{equation*}
    and 
\begin{equation*}\tilde{A}=\begin{bmatrix}
  1.1158  &  1.0657  &  1.0998 &   1.3254\\
    0.3108&    1.0519&    0.8738 &   0.3189\\
   -0.0363&    0.6257&    0.6850 &   0.5383\\
    0.6483&    1.6292&    1.0696 &   2.1548\\
   \end{bmatrix},\ \ \tilde{Q}=\begin{bmatrix}
     0.0000 &  -0.8299&    1.2312&    3.2183\\
    0.8299 &   0.0000&    1.6988 &  -2.4279\\
   -1.2312 &  -1.6988&    0.0000 &  -1.4642\\
   -3.2183 &   2.4279&    1.4642 &  -0.0000\\
   \end{bmatrix},\end{equation*}
which satisfy
\begin{equation*}
||\tilde{A}\tilde{Y}_{1}\tilde{\Lambda}_{1}^2+\tilde{Q}\tilde{Y}_{1}\tilde{\Lambda}_{1}-\tilde{A}^T\tilde{Y}_{1}||_F=1.8039e-14,\ \ ||\tilde{A}Y_2{\Lambda}_2^2+\tilde{Q}Y_2{\Lambda}_2-\tilde{A}^TY_2||_F=4.3117e-14.
\end{equation*}

Numerical results of Examples \ref{exa-eep1} and \ref{exa-eep2}  show that all the prescribed eigenvalues can be reproduced accurately by the updated system, and the remaining eigenvalues and their associated eigenvectors are kept unchanged. Moreover, for the $(T, \epsilon)$-palindromic system, the number of eigenvalues of $\Lambda_1$ which occur in quadruples needs not be equal to the number of those eigenvalues of  $\tilde{\Lambda}_1$.

\begin{example}\label{exa-eep3}
Consider the PMUP of randomly chosen $H$-palindromic system $P(\lambda)=\lambda^2A+\lambda Q+A^H$ with 
{\scriptsize$$A=\begin{bmatrix}
0.8147 + 0.7547i &  0.5469 + 0.9597i &  0.8003 + 0.6991i &  0.0357 + 0.8407i &  0.6555 + 0.2511i  & 0.8235 + 0.9172i &  0.7655 + 0.0540i\\
   0.9058 + 0.2760i &  0.9575 + 0.3404i &  0.1419 + 0.8909i &  0.8491 + 0.2543i &  0.1712 + 0.6160i &  0.6948 + 0.2858i &  0.7952 + 0.5308i\\
   0.1270 + 0.6797i &  0.9649 + 0.5853i &  0.4218 + 0.9593i &  0.9340 + 0.8143i &  0.7060 + 0.4733i &  0.3171 + 0.7572i &  0.1869 + 0.7792i\\
   0.9134 + 0.6551i &  0.1576 + 0.2238i &  0.9157 + 0.5472i &  0.6787 + 0.2435i &  0.0318 + 0.3517i &  0.9502 + 0.7537i &  0.4898 + 0.9340i\\
   0.6324 + 0.1626i &  0.9706 + 0.7513i &  0.7922 + 0.1386i &  0.7577 + 0.9293i &  0.2769 + 0.8308i &  0.0344 + 0.3804i &  0.4456 + 0.1299i\\
   0.0975 + 0.1190i &  0.9572 + 0.2551i &  0.9595 + 0.1493i &  0.7431 + 0.3500i &  0.0462 + 0.5853i  & 0.4387 + 0.5678i&   0.6463 + 0.5688i\\
   0.2785 + 0.4984i &  0.4854 + 0.5060i &  0.6557 + 0.2575i &  0.3922 + 0.1966i &  0.0971 + 0.5497i &  0.3816 + 0.0759i&   0.7094 + 0.4694i\\
   \end{bmatrix},$$}
{\scriptsize
$$Q=\begin{bmatrix}
-3.9762 &  0.9391 + 0.3070i  & 0.3912 + 0.1498i  & 1.2370 + 0.1579i &  0.3957 + 0.1193i &  0.7923 - 0.1580i  & 1.0187 - 0.8637i\\
   0.9391 - 0.3070i & -3.4741 &  1.5674 - 0.0960i &  0.7959 - 0.8403i&   1.1479 + 0.2799i &  0.5961 + 0.0778i &  0.7059 + 0.1491i\\
   0.3912 - 0.1498i &  1.5674 + 0.0960i & -3.6952 &  1.7877 + 0.1383i &  0.7982 + 0.5997i  & 1.1322 - 0.2553i  & 0.4291 - 0.1804i\\
   1.2370 - 0.1579i &  0.7959 + 0.8403i  & 1.7877 - 0.1383i & -3.9907&   1.5750 - 0.1734i   &1.6866 + 0.1681i   & 1.3819 + 0.4438i\\
   0.3957 - 0.1193i &  1.1479 - 0.2799i   &0.7982 - 0.5997i &  1.5750 + 0.1734i & -3.1372   &1.4904 - 0.1785i &  0.5837 + 0.1395i\\
   0.7923 + 0.1580i &  0.5961 - 0.0778i   &1.1322 + 0.2553i &  1.6866 - 0.1681i  & 1.4904 + 0.1785i  &-2.9003 & 0.2209 - 0.3334i\\
   1.0187 + 0.8637i &  0.7059 - 0.1491i   &0.4291 + 0.1804i &  1.3819 - 0.4438i   &0.5837 - 0.1395i  & 0.2209 + 0.3334i  & -3.5202\\
   \end{bmatrix}.$$}
The eigenvalues of $P(\lambda)$ are $\{-27.0689+7.4062i, 7.8904-0.2781i, -0.6530+4.5349i,-0.7265-4.2218i,
3.1286-0.7004i$, $-2.3956-2.3435i,0.4241+0.9056i,-0.8722-0.4892i,0.3044-0.0681i$,$-0.0311+0.2160i,-0.2133-0.2087i,-0.0396-0.2301i,
-0.0344+0.0094i,0.1266-0.0045i\}$. Note that $|0.4241+0.9056i|=1$. Suppose that we update the eigenvalues $\{(7.8904-0.2781i,0.1266-0.0045i),$ $(3.1268-0.7004i,0.3044-0.0681i), (-27.0689+7.4062i,-0.0344+0.0094i), 0.4241+0.9056i\}$ by $\{\tilde{\lambda}_j\}_{j=1}^7$ which are give by the following two cases, and keep the remaining eigenpairs unchanged.

{\rm (i)} $\tilde{\lambda}_1=-1+2i$ with algebraic multiplicity $3$ and a simple eigenvalue $\tilde{\lambda}_2=-0.6-0.8i$ with $|\tilde{\lambda}_2|=1$.

{\rm (ii)} $\tilde{\lambda}_1=-3-4i$ with algebraic multiplicity $2$ and $\tilde{\lambda}_2=-0.6-0.8i$ with algebraic multiplicity $3$.
\end{example}

By the given information, we can get $Y_1\in\mathbb{C}^{7\times 7}$ and 
\begin{equation*}\Gamma_1=\mbox{diag}\left(\begin{bmatrix}
0 & a_1\\
-\bar{a}_1 & 0\\
\end{bmatrix},\begin{bmatrix}
0 & a_2\\
-\bar{a}_2 & 0\\
\end{bmatrix},\begin{bmatrix}
0 & a_3\\
-\bar{a}_3 & 0\\
\end{bmatrix},-0.1172i\right),\end{equation*}
where $a_1=0.0602+0.3001i$, $a_2=0.1825+0.3725i$, $a_3=-0.1760-0.2313i$. 

For case {\rm (i)}, we have $\tilde{\Lambda}_1=\mbox{diag}\left((-1+2i)I_3+N_3, \frac{1}{-1-2i}I_3+N_3, -0.6-0.8i\right).$ Choose
$\tilde{\Gamma}_1=\mbox{diag}\left(\bigl[\begin{smallmatrix}
0 & S\\
-S^H & 0\\
\end{smallmatrix}\bigr], -i\right)$ randomly with
\begin{equation*}S=\begin{bmatrix}
-12.9944 + 1.4657i &  4.9900 + 0.6688i  & 1.3165 - 1.0627i\\
  24.6515 + 7.6439i&   8.2003 + 2.0782i &  0.0000 - 0.0000i\\
  16.2883 +39.0357i&  -0.0000 - 0.0000i & -0.0000 - 0.0000i\\
\end{bmatrix}.\end{equation*}
Clearly, $||\tilde{\Lambda}_1\tilde{\Gamma}_1\tilde{\Lambda}_1^H-\tilde{\Gamma}_1||_F=8.3622e-16$, i.e., $\tilde{\Gamma}_1\in\mathcal{S}_{(\tilde{\Lambda}_1, H, 1)}$. Taking $\widetilde{\Phi}=I_7$, we can obtain from Algorithm \ref{alg-eep-1} that
{\scriptsize$$\Phi=\begin{bmatrix}
   -0.0133 - 0.0139i&   0.0366 + 0.0148i&   0.0560 - 0.0298i&   0.0000 + 0.0000i&   0.0000 + 0.0000i&   0.0000 + 0.0000i&   0.0000 + 0.0000i\\
   0.0000 + 0.0000i&   0.0000 + 0.0000i&   0.0000 + 0.0000i &  0.0596 - 0.0488i &  0.0031 - 0.0036i & -0.0002 + 0.0006i  & 0.0000 + 0.0000i\\
  -0.1262 - 0.0390i&  -0.1523 - 0.0150i &  0.0464 - 0.0330i &  0.0000 + 0.0000i &  0.0000 + 0.0000i &  0.0000 + 0.0000i   & 0.0000 + 0.0000i\\
   0.0000 + 0.0000i&   0.0000 + 0.0000i  & 0.0000 + 0.0000i &  0.0107 - 0.0067i & -0.1539 + 0.1404i & -0.0018 + 0.0237i &  0.0000 + 0.0000i\\
  -0.1412 + 0.3252i &  0.0632 - 0.2097i   &0.1580 + 0.1818i &  0.0000 + 0.0000i &  0.0000 + 0.0000i &  0.0000 + 0.0000i  & 0.0000 + 0.0000i\\
   0.0000 + 0.0000i &  0.0000 + 0.0000i   &0.0000 + 0.0000i & -0.0032 - 0.0064i &  0.0227 + 0.0492i & -0.4439 - 0.1779i  & 0.0000 + 0.0000i\\
   0.0000 + 0.0000i &  0.0000 + 0.0000i  & 0.0000 + 0.0000i &  0.0000 + 0.0000i  & 0.0000 + 0.0000i &  0.0000 + 0.0000i   & 0.2421 - 0.2421i\\
\end{bmatrix},$$}
which satisfies $||\Phi\tilde{\Gamma}_1\Phi^H-\Gamma_1||_F=4.1309e-15$. Setting $\tilde{Y}_1=Y_1\Phi$, we can obtain from (\ref{gg-10-1}) that  
{\scriptsize
$$\begin{array}{l}
\tilde{A}=\\
\begin{bmatrix}
   3.9731 - 3.3603i & 10.4667 - 5.2824i & -0.7475 +10.0734i&   4.9792 +10.8087i&  -0.9394 + 9.5080i&   6.5600 - 7.0724i & -2.2412 + 3.7615i\\
  -1.0018 + 7.6765i &  1.8450 +11.1626i  &-5.1134 +11.1924i&  -2.8711 +12.5864i&  -7.8130 + 6.4188i&   0.8150 + 8.1950i  & -2.5554 +10.9103i\\
   3.0222 - 5.9529i &  9.5431 - 8.7064i  &-2.7414 + 8.6982i&   4.3144 + 9.4399i&   0.3068 + 8.6394i&   4.0810 -10.4176i  & -5.9583 + 2.7542i\\
  -4.9533 -10.0016i & -6.6949 -15.0639i  &-4.2695 + 1.3770i & -0.7029 - 0.7786i&   2.6778 - 0.2868i&  -9.1765 -13.0755i & -14.0583 - 0.2718i\\
   1.8374 - 3.6541i &  6.4555 - 4.8662i  &-1.4645 + 7.2131i &  3.5681 + 8.3011i&  -0.8681 + 7.2059i&   2.1369 - 6.2912i  & -4.4601 + 3.1713i\\
  -1.0286 - 0.2900i &  1.9066 - 1.2975i  &-0.8447 + 6.0506i  & 1.3509 + 6.1394i&  -1.0129 + 5.1997i&  -0.9899 - 2.1973i  & -2.4116 + 4.2592i\\
  -5.6797 - 1.9316i & -7.8973 - 3.5132i  &-3.2175 - 0.7382i  &-3.6396 - 1.7311i&  -0.1647 - 2.2670i&  -8.5825 - 3.3112i  & -7.5117 - 2.4806i\\
  \end{bmatrix},\\
  \end{array}$$}
{\scriptsize
$$\begin{array}{l}
\tilde{Q}=\\
\begin{bmatrix}
   6.3441 + 0.0000i&   2.7150 - 4.9649i&   2.2839 + 9.5896i&   0.5116 +24.8307i & -1.0046 + 8.1448i&   7.3499 - 1.4858i & -4.9374 + 5.9266i\\
   2.7150 + 4.9649i&   7.6013 + 0.0000i &-12.6865 +15.2991i& -10.7582 +19.6963i &-11.9846 +15.3547i&   7.9935 - 3.1288i  & -6.2772 - 0.5474i\\
   2.2839 - 9.5896i &-12.6865 -15.2991i & -4.5562 - 0.0000i&  -2.9373 +16.2795i &  1.8142 - 2.3419i&  -4.9206 - 7.6461i & -14.8863 + 6.5150i\\
   0.5116 -24.8307i &-10.7582 -19.6963i  &-2.9373 -16.2795i&  -2.4692 - 0.0000i &  1.3169 -12.1059i&  -0.9265 -18.3790i & -10.1310 + 0.8633i\\
  -1.0046 - 8.1448i& -11.9846 -15.3547i   &1.8142 + 2.3419i&   1.3169 +12.1059i & -0.2775 - 0.0000i&  -6.8489 - 8.3369i & -10.6444 + 7.2163i\\
   7.3499 + 1.4858i&   7.9935 + 3.1288i  &-4.9206 + 7.6461i&  -0.9265 +18.3790i & -6.8489 + 8.3369i&   6.5086 + 0.0000i  & -3.7580 + 2.9696i\\
  -4.9374 - 5.9266i & -6.2772 + 0.5474i &-14.8863 - 6.5150i& -10.1310 - 0.8633i &-10.6444 - 7.2163i&  -3.7580 - 2.9696i & -18.2549 - 0.0000i\\
  \end{bmatrix}\\
  \end{array}$$}
 which satisfy $||\tilde{Q}^H-\tilde{Q}||_F=4.5306e-10$ and
\begin{equation*}||\tilde{A}\tilde{Y}_1\tilde{\Lambda}_1^2+\tilde{Q}\tilde{Y}_1\tilde{\Lambda}_1+\tilde{A}^H\tilde{Y}_1||_F=1.7716e-10,\ \ ||\tilde{A}Y_2{\Lambda}_2^2+\tilde{Q}{Y}_2{\Lambda}_2+\tilde{A}^H{Y}_2||_F= 3.1808e-10.\end{equation*}

For case {\rm (ii)}, we have $\tilde{\Lambda}_1=\mbox{diag}\left((-3-4i)I_2+N_2, \frac{1}{-3+4i}I_2+N_2, (-0.6-0.8i)I_3+N_3\right).$ Randomly choose
$\tilde{\Gamma}_1=\mbox{diag}\left(\bigl[\begin{smallmatrix}
0 & S_1\\
-S_1^H & 0\\
\end{smallmatrix}\bigr], S_2\right)$, 
where
\begin{equation*}S_1=\begin{bmatrix}
-0.0046 - 0.0052i & -0.0108 + 0.0385i\\
  -0.9991 + 0.0094i & -0.0000 + 0.0000i\\
\end{bmatrix},\end{equation*}\begin{equation*}S_2=\begin{bmatrix}
 0.0000 + 0.5264i & -0.1634 + 0.1170i &  0.1162 - 0.0339i\\
   0.1634 + 0.1170i &  0.0000 - 0.1210i&  -0.0000 - 0.0000i\\
  -0.1162 - 0.0339i &  0.0000 - 0.0000i &  0.0000 - 0.0000i\\
  \end{bmatrix},\end{equation*}
  which satisfy $||\tilde{\Lambda}_1\tilde{\Gamma}_1\tilde{\Lambda}_1^H-\tilde{\Gamma}_1||_F=1.5017e-15$. Choosing $\widetilde{\Phi}=I_7$, we can obtain from Algorithm \ref{alg-eep-1} that 
  {\scriptsize
  $$\Phi=\begin{bmatrix}
   -0.0038 + 0.0006i&  -0.4248 - 0.3548i&   0.0000 + 0.0000i &  0.0000 + 0.0000i &  0.0000 + 0.0000i &  0.0000 + 0.0000i &  0.0000 + 0.0000i\\
   0.0000 + 0.0000i&   0.0000 + 0.0000i  & 0.4281 - 0.3508i  &-0.0001 - 0.0000i  & 0.0000 + 0.0000i  & 0.0000 + 0.0000i&   0.0000 + 0.0000i\\
   3.0916 - 0.9061i&  -0.0189 - 0.0121i   &0.0000 + 0.0000i  & 0.0000 + 0.0000i  & 0.0000 + 0.0000i  & 0.0000 + 0.0000i &  0.0000 + 0.0000i\\
   0.0000 + 0.0000i&   0.0000 + 0.0000i   &0.0008 - 0.0005i  & 2.9084 + 1.3859i  & 0.0000 + 0.0000i  & 0.0000 + 0.0000i  & 0.0000 + 0.0000i\\
   0.0000 + 0.0000i&   0.0000 + 0.0000i   &0.0000 + 0.0000i  & 0.0000 + 0.0000i  &-0.6215 - 0.5720i  & 0.1047 - 0.8498i   & 1.2544 - 2.5307i\\
   0.0000 + 0.0000i&   0.0000 + 0.0000i   &0.0000 + 0.0000i  & 0.0000 + 0.0000i  & 0.0144 + 0.0809i  & 0.4191 - 0.4295i &  2.6966 - 1.3365i\\
   0.0000 + 0.0000i&   0.0000 + 0.0000i   &0.0000 + 0.0000i &  0.0000 + 0.0000i  &-0.1138 - 0.2075i  &-0.3223 + 0.6710i  & 0.1095 - 0.1095i\\ 
  \end{bmatrix},$$}
   with $||\Phi\tilde{\Gamma}_1\Phi^H-\Gamma_1||_F=4.0322e-15$. Then,  we can obtain from (\ref{gg-10-1}) that 
 {\scriptsize$$\tilde{A}=\begin{bmatrix}
  -3.5026 + 3.6133i&  -3.7588 + 2.7466i&  -0.1126 + 0.3782i&  -2.7692 - 2.5176i & -3.2182 + 1.7159i&  -0.7378 + 1.4255i & -7.5387 - 1.7600i\\
  -0.8489 + 2.3269i&  -1.5242 + 0.7422i &  0.7242 - 0.1038i&  -0.5180 - 2.5260i & -0.2803 + 1.9143i&  -0.7154 - 0.0585i & -4.8161 - 0.3175i\\
  -1.2449 + 3.1440i&  -0.4500 + 1.6868i &  1.7114 - 0.0020i&   0.1440 - 2.5983i & -0.3231 + 2.4856i&   0.6219 + 0.6281i & -4.5622 - 1.0591i\\
   4.4939 + 2.2527i&   4.0193 + 0.3024i &  5.6040 - 0.7309i&   3.4126 - 2.2799i &  3.3946 + 3.3362i&   4.6725 + 0.1580i &  0.6929 + 1.2370i\\
  -1.2953 + 3.3783i&  -1.0905 + 3.1088i &  0.9869 + 0.0522i&  -1.0982 - 1.2876i & -1.8119 + 3.0295i&  -0.2524 + 1.3682i & -5.8284 + 0.3224i\\
  -0.9100 + 2.5178i&  -0.6073 + 1.2106i &  1.5588 - 0.4372i&  -0.7112 - 1.5569i & -0.0439 + 2.5169i&  -0.5693 + 0.1953i & -3.6159 - 0.1417i\\
   1.2678 + 5.8652i&   1.5612 + 4.9450i &  2.8567 + 3.1627i&   0.9525 + 2.1122i &  0.2416 + 5.5797i&   0.8277 + 3.1132i & -3.5745 + 3.8444i\\
 \end{bmatrix},$$}
 {\scriptsize$$\tilde{Q}=\begin{bmatrix}
 -13.5111 + 0.0000i&  -5.3083 - 1.4341i&  -3.0726 - 1.9760i &  1.2516 - 7.2103i & -6.2517 - 1.8733i&  -2.9823 - 2.6150i & -6.2992 -11.0602i\\
  -5.3083 + 1.4341i&  -7.3211 + 0.0000i&   1.2178 - 0.3156i &  3.1580 - 6.9580i & -3.4480 + 0.4578i&  -1.5032 - 0.3485i & -5.9977 - 8.1041i\\
  -3.0726 + 1.9760i&   1.2178 + 0.3156i&  -0.8639 - 0.0000i &  8.2049 - 4.4488i & -1.0194 + 1.9211i&   2.1085 - 0.3021i & -2.0236 - 7.1315i\\
   1.2516 + 7.2103i&   3.1580 + 6.9580i&   8.2049 + 4.4488i &  5.5315 + 0.0000i &  1.5643 + 5.9013i&   4.8997 + 4.6812i & -0.2164 - 2.4030i\\
  -6.2517 + 1.8733i&  -3.4480 - 0.4578i&  -1.0194 - 1.9211i &  1.5643 - 5.9013i & -7.5692 + 0.0000i&  -0.9484 - 1.4467i & -6.5565 - 6.8059i\\
  -2.9823 + 2.6150i&  -1.5032 + 0.3485i&   2.1085 + 0.3021i &  4.8997 - 4.6812i & -0.9484 + 1.4467i&  -3.9143 - 0.0000i & -4.3014 - 5.8846i\\
  -6.2992 +11.0602i&  -5.9977 + 8.1041i&  -2.0236 + 7.1315i & -0.2164 + 2.4030i & -6.5565 + 6.8059i&  -4.3014 + 5.8846i& -13.1183 + 0.0000i\\ 
  \end{bmatrix},$$}
  with $||\tilde{Q}-\tilde{Q}^H||_F=1.5169e-10$, which satisfy
 \begin{equation*}||\tilde{A}\tilde{Y}_1\tilde{\Lambda}_1^2+\tilde{Q}\tilde{Y}_1\tilde{\Lambda}_1+\tilde{A}^H\tilde{Y}_1||_F=4.0668e-10,\ \ ||\tilde{A}Y_2{\Lambda}_2^2+\tilde{Q}{Y}_2{\Lambda}_2+\tilde{A}^H{Y}_2||_F=1.2003e-10.\end{equation*}

We can see from Example \ref{exa-eep3}  that all the prescribed eigenvalues can be reproduced accurately  by the updated system, and the remaining eigenpairs are kept unchanged, which shows that the Algorithm \ref{alg-eep-1} is effective for the multiple eigenvalues. Moreover, for the $(H,\epsilon)$-palindromic system, the pair of eigenvalues $(\lambda, \frac{1}{\bar{\lambda}})$ with $|\lambda|\neq 1$ can be replaced by two model one eigenvalues. 
 
\section{Conclusions}
In this paper, we have derived the spectral decomposition of the $(\star, \epsilon)$-structured palindromic quadratic matrix polynomial in the unified form. With a standard pair of $P(\lambda)$, the structures of the parameter matrix $\Gamma$ are provided in the Theorem \ref{thm-gamma} and Theorem \ref{thm-gamma-2}. When all eigenvalues of $P(\lambda)$ are semi-simple, $\Gamma$ has some special simpler forms which are given by the Theorem \ref{thm-s-1} and Theorem \ref{thm-s-2}. Based on the spectral decomposition,  analytical solutions of the no spill-over MUP of the $(\star, \epsilon)$-palindromic system are characterized. Numerical experiments demonstrate that our proposed algorithms are effective for both simple and multiple eigenvalues.




\section*{Declaration of  competing interest}
The authors declare that they have no known competing financial interests or personal relationships that could have appeared to influence the work reported in this paper.


 \end{document}